\newtheorem{theorem}{Theorem}
\theoremstyle{plain}
\newtheorem{corollary}{Corollary}
\newtheorem{definition}{Definition}
\newtheorem{example}{Example}
\newtheorem{lemma}{Lemma}
\newtheorem{proposition}{Proposition}
\numberwithin{equation}{section}
\begin{document}
\title{Generalization of $([e],[e]\vee \lbrack c])$\textbf{-Ideals Of
BE-algebras}}
\author{Ahmad Fawad Ali}
\curraddr{Department of Basic Sciences, Riphah Internaional University
Islamabad Pakistan.}
\email{afawada@gmail.com}
\author{Saleem Abdullah}
\curraddr{Department of mathematics, Quaid-e-Aazam University Islamabad
Pakistan. }
\email{saleemabdullah81@yahoo.com}
\author{Muhammad Sarwar Kamran}
\address{Department of Basic Sciences, Riphah Internaional University
Islamabad Pakistan}
\email{drsarwarkamran@gmail.com}
\author{Muhammad Aslam}
\address{Department of Mathematics, King Khlid University Saudi Arabia.}
\keywords{BE-algebra, (Transitive, self distributive) BE-algebra, Ideal, $N$%
-ideal, $([e],[e]\vee \lbrack c_{k}])$-ideal.}

\begin{abstract}
In this paper, using $N$-structure, the notion of an $N$-ideal in a
BE-algebra is introduced. Conditions for an $N$-structure to be an $N$-ideal
are provided. To obtain a more general form of an $N$-ideal, a point $N$%
-structure which is ($k$ conditionally) employed in an $N$-structure is
proposed. Using these notions, the concept of an $([e],[e]\vee \lbrack
c_{k}])$-ideal is introduced and related properties are investigated. $%
([e],[e]\vee \lbrack c_{k}])$-ideal is a generalized form of $([e],[e]\vee
\lbrack c])$-ideal. Characterizations of $([e],[e]\vee \lbrack c_{k}])$%
-ideals are discussed.
\end{abstract}

\maketitle

\section{\textbf{Introduction}}

A (crisp) set $A$ in a universe $X$ can be defined in the form of its
characteristic function $%
%TCIMACRO{\U{3bc} }%
%BeginExpansion
\mu
%EndExpansion
_{A}:X\rightarrow \{0,1\}$ yielding the value $1$ for elements belonging to
the set $A$ and the value $0$ for elements excluded from the set $A$.

So far most of the generalization of the crisp set have been conducted on
the unit interval $[0,1]$ and they are consistent with the asymmetry
observation. In other words, the generalization of the crisp set to fuzzy
sets spread positive information that fit the crisp point $\{1\}$ into the
interval $[0,1]$.

Because no negative meaning of information is suggested, we now feel a need
to deal with negative information. To do so, we also feel a need to supply a
mathematical tool.

To attain such an object, Jun et al.\cite{Lee} introduced a new function
which is called a negative-valued function, and constructed N-structures.
They applied $N$-structures to BCK/BCI-algebras, and discussed $N$-ideals in
BCK/BCI-algebras. In 1966, Imai and Iseki \cite{Imai} and Iseki \cite{Iseki}
introduced two classes of abstract algebras: BCK-algebras and BCI-algebras.
It is known that the class of BCK-algebras is a proper subclass of the class
of BCI-algebras. As a generalization of a BCK-algebra, Kim and Kim \cite{Kim}
introduced the notion of a BE-algebra, and investigated several properties.
In Ahn and So \cite{Ahn} introduced the notion of ideals in BE-algebras.
They considered several descriptions of ideals in BE-algebras.

M.S. Kang, and Y.B. Jun \cite{Kang}, introduced the notion of an $N$-ideal
of BE-algebra. In paper \cite{Kang}, a point $N$-structure which is
(Conditionally) employed in an $N$-structure is proposed. The concept of $%
([e],[e]\vee \lbrack c])$-ideals and discussed the related properties.

In this paper, the concept of an $([e],[e]\vee \lbrack c_{k}])$-ideal is
introduced and related properties are investigated. $([e],[e]\vee \lbrack
c_{k}])$-ideal is a generalized form of $([e],[e]\vee \lbrack c])$-ideal. In
this paper, a point $N$-structure which is $(k$ Conditionally$)$ employed in
an $N$-structure is introduced. Characterizations of $([e],[e]\vee \lbrack
c_{k}])$-ideals are discussed.

\section{\textbf{Preliminaries}}

\begin{definition}
$($\cite{Kim}$)$ Let $K(\tau )$ be a class of type $\tau =(2,0)$. A system $%
(X;\ast ,1)\in K(\tau )$ define a \textbf{BE-Algebra} if the following
axioms hold:\newline
$(V_{1})$ $(\forall x\in X)$ $(\ x\ast x=1\ )$,\newline
$(V_{2})$ $(\forall x\in X)$ $(\ x\ast 1=1\ )$,\newline
$(V_{3})$ $(\forall x\in X)$ $(\ 1\ast x=x\ )$,\newline
$(V_{4})$ $(\forall x,y,z\in X)$ $(\ x\ast (y\ast z)=y\ast (x\ast z)\ )$.

\begin{definition}
$($\cite{Kang}$)$ A relation "$\leq $" on a BE-algebra $X$ is defined by%
\newline
$(\forall x,y\in X)$ $($ $x\leq y\Leftrightarrow x\ast y=1$ $)$.
\end{definition}
\end{definition}

\begin{definition}
$($\cite{Ahn}$)$ A BE-algebra $X$ is called \textbf{Self-distributive} if $%
x\ast (y\ast z)=(x\ast y)\ast (x\ast z)$ for all $x,y,z\in X$.
\end{definition}

\begin{definition}
$($\cite{Kim}$)$ A BE-algebra $(X;\ast ,1)$ is said to be \textbf{Transitive}
if it satisfies:\newline
$(\forall x,y,z\in X)$ $($ $y\ast z\leq (x\ast y)\ast (x\ast z)$ $)$.
\end{definition}

\textbf{Result: (\cite{Ahn}) }The converse of above proposition is not true
in general.

\textbf{Note: (\cite{Kang}) }The collection of function from a set $X$ to $%
[-1,0]$ is denoted by $\tau (X,[-1,0])$.

\begin{definition}
$($\cite{Ahn}$)$ Let $I$ a non-empty subset of an BE-algebra $X$ then $I$ is
called an \textbf{Ideal} of $X$ if;\newline
$(1)$ $(\forall x\in X$, $s\in I)$ $(\ x\ast s\in I\ )$,\newline
$(2)$ $(\forall x\in X$, $s,q\in I)$ $(\ (s\ast (q\ast x))\ast x\in I\ )$.
\end{definition}

\begin{lemma}
$($\cite{Kang}$)$\label{LEM1} A non-empty subset $I$ of $X$ is an ideal of $%
X $ if and only if it satisfies:\newline
$(1)$ $1\in I$,\newline
$(2)$ $(\forall x,z\in X)$ $($ $\forall y\in I$ $)$ $(\ x\ast (y\ast z)\in
I\Rightarrow x\ast z\in I$ $)\ )$.
\end{lemma}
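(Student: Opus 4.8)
The plan is to prove both implications by reducing everything to the original definition of an ideal together with the axioms $(V_{1})$--$(V_{4})$ and the unit laws they encode. The forward direction starts from a set $I$ that is an ideal in the sense of the definition, and the converse starts from the two conditions (1) and (2) in the statement; in each case the work is to exhibit the right instantiations of the universally quantified variables.

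For the forward direction, to obtain (1) I would exploit non-emptiness: pick any $s\in I$, apply condition (1) of the definition with the free variable set equal to $s$ to get $s\ast s\in I$, and then invoke $(V_{1})$ to conclude $s\ast s=1\in I$. For (2), suppose $y\in I$ and $x\ast (y\ast z)\in I$. The idea is to apply condition (2) of the definition to the two elements $s:=y\in I$ and $q:=x\ast (y\ast z)\in I$, and --- this is the decisive choice --- to instantiate the remaining free variable as $x\ast z$. This produces $\big(y\ast ((x\ast (y\ast z))\ast (x\ast z))\big)\ast (x\ast z)\in I$. The main work is then a telescoping simplification: using $(V_{4})$ one rewrites $x\ast (y\ast z)=y\ast (x\ast z)$, so the inner term becomes $(y\ast (x\ast z))\ast (x\ast z)$; applying $(V_{4})$ once more and then $(V_{1})$ collapses $y\ast\big((y\ast (x\ast z))\ast (x\ast z)\big)$ to $1$; finally $(V_{3})$ reduces $1\ast (x\ast z)$ to $x\ast z$. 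Hence $x\ast z\in I$, as required.

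For the converse, assume $1\in I$ and the implication (2). To recover condition (1) of the definition, fix $x\in X$ and $s\in I$ and apply the implication with $y=z=s$: since $s\ast s=1$ by $(V_{1})$ and $x\ast 1=1$ by $(V_{2})$, the hypothesis $x\ast (s\ast s)=1\in I$ holds, so the implication yields $x\ast s\in I$. To recover condition (2), fix $s,q\in I$ and $x\in X$ and write $u:=q\ast x$. I would first establish the auxiliary membership $(s\ast u)\ast u\in I$ by applying the implication with $y=s$, noting that $(s\ast u)\ast (s\ast u)=1\in I$ by $(V_{1})$. With this in hand, a second application of the implication --- now with $y=q$ and the conclusion matched to $(s\ast (q\ast x))\ast x$, the premise being exactly $(s\ast (q\ast x))\ast (q\ast x)=(s\ast u)\ast u\in I$ --- upgrades it to $(s\ast (q\ast x))\ast x\in I$, which is precisely condition (2).

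I expect the main obstacle to be the forward direction's part (2): the argument hinges on guessing the correct instantiation of the free variable (namely $x\ast z$) in the definition's second condition and then carrying out the $(V_{4})$/$(V_{1})$/$(V_{3})$ cancellation in the right order. In the converse the only subtlety is recognizing that condition (2) of the definition does not follow from a single use of the implication, but requires the intermediate membership $(s\ast (q\ast x))\ast (q\ast x)\in I$ as a stepping stone.
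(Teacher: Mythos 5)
Your proof is correct. Note, however, that the paper itself gives no proof of this lemma: it is quoted verbatim from the cited reference [Kang] (Kang and Jun, \emph{Ideal theory of BE-algebras based on N-structures}), so there is no in-paper argument to compare yours against. Your proposal is therefore a valid, self-contained verification that fills in what the paper leaves to the citation. Each step checks out: in the forward direction, instantiating the definition's second condition at $s:=y$, $q:=x\ast (y\ast z)$ and free variable $x\ast z$ gives $\bigl(y\ast ((x\ast (y\ast z))\ast (x\ast z))\bigr)\ast (x\ast z)\in I$, and the chain $(x\ast (y\ast z))\ast (x\ast z)=(y\ast (x\ast z))\ast (x\ast z)$ by $(V_{4})$, then $y\ast \bigl((y\ast (x\ast z))\ast (x\ast z)\bigr)=(y\ast (x\ast z))\ast (y\ast (x\ast z))=1$ by $(V_{4})$ and $(V_{1})$, then $1\ast (x\ast z)=x\ast z$ by $(V_{3})$, collapses that element to $x\ast z$ exactly as you claim. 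In the converse, your two-stage use of the implication --- first with $y=s$, $z=q\ast x$ and premise $(s\ast u)\ast (s\ast u)=1\in I$ to get the stepping stone $(s\ast (q\ast x))\ast (q\ast x)\in I$, then with $y=q$, $z=x$ to upgrade it to $(s\ast (q\ast x))\ast x\in I$ --- is the right decomposition; a single application indeed cannot produce the definition's second condition directly.
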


\section{$N$\textbf{-ideals of BE-algebra}}

\begin{definition}
$($\cite{Kang}$)$ An element of $\tau (X,[-1,0])$ is called a \textbf{%
Negative-valued function} from $X$ to $[-1,0]$ $($briefly, $N$-function on $%
X)$.
\end{definition}

\begin{definition}
$($\cite{Kang}$)$ An ordered pair $(X$,$f)$ of $X$ and an $N$-function $f$
on $X$ is called an $N$\textbf{-structure.}
\end{definition}

\begin{definition}
$($\cite{Kang}$)$ For any $N$-structure $(X,f)$ the nonempty set%
\begin{equation*}
C(f;t):=\{x\in X\text{ }|\text{ }f(x)\leq t\}
\end{equation*}%
is called a \textbf{closed }$(f,t)$\textbf{-cut} of $(X,f)$, where $t\in
\lbrack -1,0]$.
\end{definition}

\begin{definition}
$($\cite{Kang}$)$ By an $N$-ideal of $X$ we mean an $N$-structure $(X,f)$
which satisfies the following condition:\newline
$(\forall t\in \lbrack -1,0])$ $(\ C(f;t)\in J(X)\cup \{\emptyset \}\ )$.%
\newline
Where $J(X)$ is a set of all ideal of $X$.
\end{definition}

\begin{example}
Let $X=\{1,\alpha ,h,m,0\}$ be a set with a multiplication table given by;%
\newline
\begin{equation*}
\begin{tabular}{|l|l|l|l|l|l|}
\hline
$\ast $ & $1$ & $\alpha $ & $h$ & $m$ & $0$ \\ \hline
$1$ & $1$ & $\alpha $ & $h$ & $m$ & $0$ \\ \hline
$\alpha $ & $1$ & $1$ & $\alpha $ & $m$ & $m$ \\ \hline
$h$ & $1$ & $1$ & $1$ & $m$ & $m$ \\ \hline
$m$ & $1$ & $\alpha $ & $h$ & $1$ & $\alpha $ \\ \hline
$0$ & $1$ & $1$ & $\alpha $ & $1$ & $1$ \\ \hline
\end{tabular}%
\end{equation*}%
\newline
Then $(X;\ast ,1)$ is a BE-algebra. Consider an $N$-structure $(X,f)$ in
which $t$ is defined by;\newline
$f(y)=\left\{ 
\begin{array}{ll}
-0.7 & \text{if \ }y\in \{1,\alpha ,h\} \\ 
-0.2 & \text{if \ }y\in \{m,0\}%
\end{array}%
\right. $\newline
Then\newline
$C(f;t)=\left\{ 
\begin{array}{ll}
\{1,\alpha ,h\} & \text{if \ }t\in \lbrack -0.7,0] \\ 
\emptyset & \text{if \ }t\in \lbrack -1,-0.7)%
\end{array}%
\right. $\newline
Note that $\{1,\alpha ,h\}$ is an ideals of BE-algebra $X$, and hence $(X,f)$
is an $N$-ideal of $X$.
\end{example}

\begin{lemma}
Each $N$-ideal $(X,f)$ of BE-algebra $X$ satisfies the condition:\newline
$(\forall x\in X)$ $(\ f(1)\leq f(x)\ )$.
\end{lemma}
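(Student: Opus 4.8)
The plan is to exploit the defining property of an $N$-ideal, namely that every non-empty closed cut $C(f;t)$ is an ideal of $X$, together with the fact recorded in Lemma \ref{LEM1}(1) that every ideal of $X$ contains the element $1$. The strategy is pointwise: fix an arbitrary $x\in X$ and produce a single cut that simultaneously contains $x$ and is therefore non-empty, so that it must be a genuine ideal and hence must contain $1$.

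Concretely, first I would fix $x\in X$ and set $t:=f(x)\in\lbrack -1,0]$. Since $f(x)\leq t$ holds trivially, the element $x$ lies in $C(f;t)=\{y\in X\mid f(y)\leq t\}$, so this cut is non-empty. Because $(X,f)$ is an $N$-ideal, the defining condition $C(f;t)\in J(X)\cup\{\emptyset\}$ forces $C(f;t)$ to be an actual ideal of $X$, the empty alternative being excluded by the previous observation.

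Next I would invoke Lemma \ref{LEM1}(1), which guarantees that any ideal of $X$ contains $1$; thus $1\in C(f;t)$. Unwinding the definition of the cut, the membership $1\in C(f;t)$ means exactly that $f(1)\leq t=f(x)$. Since $x\in X$ was arbitrary, this yields $f(1)\leq f(x)$ for all $x\in X$, which is the asserted condition.

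I do not anticipate a genuine obstacle here: the argument is a direct translation between the cut formulation of an $N$-ideal and the elementary fact that ideals contain $1$. The only point requiring care is the choice of the threshold $t$ equal to $f(x)$ rather than anything strictly smaller, since it is precisely this choice that places $x$ inside the cut and thereby rules out the empty case in $J(X)\cup\{\emptyset\}$.
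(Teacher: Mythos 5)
Your proof is correct, and it is actually more complete than the one in the paper. The paper disposes of this lemma in one line: since $x\ast x=1$ by $(V_{1})$, write $f(1)=f(x\ast x)\leq f(x)$. But the inequality $f(x\ast x)\leq f(x)$ is not among the stated properties of an $N$-ideal, which is defined purely by the cut condition $C(f;t)\in J(X)\cup \{\emptyset \}$; justifying it requires exactly the move you make explicit, namely passing to the cut at threshold $t=f(x)$, noting it is nonempty (it contains $x$) and hence an ideal, and then extracting membership of $1$. Where the paper would implicitly use condition $(1)$ of the definition of ideal ($s\in I$ implies $x\ast s\in I$, with $s=x$, so $x\ast x=1\in I$), you instead invoke Lemma \ref{LEM1}$(1)$, which states directly that every ideal contains $1$; this lets you bypass the algebraic identity $x\ast x=1$ altogether. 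Both arguments pivot on the same object, the cut $C(f;f(x))$, so the routes are close in spirit, but yours spells out the cut argument that the paper's one-liner elides, and your choice of $t=f(x)$ (rather than anything smaller) is indeed the precise point that rules out the empty alternative.
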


\begin{proof}
Since in BE-algebra we have $x\ast x=1$, thus we have $f(1)=f(x\ast x)\leq
f(x)$ for all $x\in X$.
\end{proof}

\begin{proposition}
\label{PRO6}Each $N$-ideal $f$ of BE-algebra $X$ satisfies the condition:%
\newline
$(\forall x,y\in X)$ $(\ f((x\ast y)\ast y)\leq f(x)\ )$.
\end{proposition}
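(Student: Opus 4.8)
The plan is to reduce the claimed inequality to an \emph{upward-closure} property of the level cuts of $f$, exploiting that for an $N$-ideal every nonempty cut $C(f;t)$ is an ideal of $X$. Fix $x,y\in X$ and set $t:=f(x)$. Then $f(x)\leq t$, so $x\in C(f;t)$; in particular this cut is nonempty, hence by the $N$-ideal hypothesis it lies in $J(X)$, i.e. it is an ideal of $X$. With this choice of $t$ the goal $f((x\ast y)\ast y)\leq f(x)=t$ becomes equivalent to the membership statement $(x\ast y)\ast y\in C(f;t)$, which is what I would aim to establish.

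First I would prove the purely algebraic identity $x\ast\big((x\ast y)\ast y\big)=1$. Applying axiom $(V_4)$ to swap the two outer left-arguments gives $x\ast\big((x\ast y)\ast y\big)=(x\ast y)\ast\big(x\ast y\big)$, and then $(V_1)$ collapses the right-hand side to $1$. In the language of the relation ``$\leq$'' this says precisely $x\leq (x\ast y)\ast y$, so the proposition is really the assertion that ideals, seen through their level cuts, are upward closed for $\leq$.

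Next I would feed this identity into the ideal characterization of Lemma~\ref{LEM1}. Writing $w:=(x\ast y)\ast y$, the identity just proved reads $x\ast w=1$. In Lemma~\ref{LEM1}(2) I would take the two universally quantified $X$-variables to be $1$ and $w$, and take the required ideal-element to be $x$ (legitimate, since $x\in C(f;t)$). The premise then becomes $1\ast(x\ast w)\in C(f;t)$, i.e. $1\ast 1=1\in C(f;t)$, which holds by Lemma~\ref{LEM1}(1). The conclusion is therefore $1\ast w\in C(f;t)$, and $(V_3)$ turns this into $w\in C(f;t)$, that is $(x\ast y)\ast y\in C(f;t)$. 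Unwinding the definition of the cut yields $f((x\ast y)\ast y)\leq t=f(x)$, as desired.

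I expect the main obstacle to be bookkeeping rather than genuine difficulty: correctly deriving $x\ast\big((x\ast y)\ast y\big)=1$ by placing the $(V_4)$ swap in the right argument slots, and then selecting the substitution in Lemma~\ref{LEM1}(2) that converts ``$x\leq w$ with $x$ in the ideal'' into ``$w$ in the ideal.'' I would also be careful to evaluate the cut at exactly $t=f(x)$, so that $x$ is guaranteed to lie in it and the cut is nonempty (and hence an ideal, not the empty set) before Lemma~\ref{LEM1} is invoked.
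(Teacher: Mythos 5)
Your proof is correct and complete, but there is nothing in the paper to compare it against: the paper's entire proof of this proposition is the single word ``Straightforward,'' so your write-up actually supplies the missing argument. Every step checks out: taking $t:=f(x)\in[-1,0]$ makes $C(f;t)$ nonempty and hence an ideal; the identity $x\ast((x\ast y)\ast y)=(x\ast y)\ast(x\ast y)=1$ is a correct application of $(V_4)$ followed by $(V_1)$; and the substitution of $1$, $(x\ast y)\ast y$, and $x$ for the variables $x$, $z$, $y$ in Lemma~\ref{LEM1}(2), combined with $(V_3)$, puts $(x\ast y)\ast y$ into the cut, which unwinds to the desired inequality. If you want a shorter route, you can bypass both Lemma~\ref{LEM1} and the algebraic identity by using the paper's original two-condition definition of ideal directly: condition $(1)$ with the $X$-element equal to $s$ gives $s\ast s=1\in C(f;t)$, and then condition $(2)$ with $s=1$, $q=x$, and free variable $y$ gives $(1\ast(x\ast y))\ast y=(x\ast y)\ast y\in C(f;t)$ at once, again by $(V_3)$. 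Both arguments are equally rigorous; yours has the merit of making explicit the conceptual content that the cuts of an $N$-ideal are upward closed under the relation $\leq$, i.e.\ that $x\leq(x\ast y)\ast y$ is what drives the inequality.
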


\begin{proof}
Straightforward.
\end{proof}

\begin{proposition}
Each $N$-ideal $f$ of BE-algebra $X$\ satisfies the condition;\newline
$(\forall x,y\in X)$ $(\ f(y)\leq \max \{f(x),f(x\ast y)\}\ )$.
\end{proposition}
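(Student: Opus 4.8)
The plan is to exploit the defining property of an $N$-ideal through its closed cuts, rather than arguing directly on $f$. Fix $x,y\in X$ and set $t:=\max\{f(x),f(x\ast y)\}$. Since $f$ takes values in $[-1,0]$, we have $t\in[-1,0]$, so the closed cut $C(f;t)$ is meaningful and, by the definition of an $N$-ideal, belongs to $J(X)\cup\{\emptyset\}$. The point of introducing this particular $t$ is that both $x$ and $x\ast y$ land inside the cut: indeed $f(x)\leq t$ and $f(x\ast y)\leq t$, so $x\in C(f;t)$ and $x\ast y\in C(f;t)$. In particular $C(f;t)$ is nonempty, hence it is an ideal of $X$, and the whole problem reduces to showing $y\in C(f;t)$, which is exactly $f(y)\leq t$.

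To place $y$ in the cut I would invoke the ideal characterization of Lemma~\ref{LEM1}, which for the ideal $I:=C(f;t)$ reads: for all $a,c\in X$ and all $b\in I$, if $a\ast(b\ast c)\in I$ then $a\ast c\in I$. The substitution that makes everything collapse is $a=1$, $b=x$, $c=y$. With this choice, axiom $(V_3)$ gives $a\ast(b\ast c)=1\ast(x\ast y)=x\ast y$, which lies in $I$ by the previous paragraph, so the hypothesis of Lemma~\ref{LEM1}$(2)$ is met (noting $b=x\in I$). The conclusion then yields $a\ast c=1\ast y=y\in I$, again by $(V_3)$.

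Having shown $y\in C(f;t)$, the definition of the cut gives $f(y)\leq t=\max\{f(x),f(x\ast y)\}$, which is the desired inequality for the arbitrary pair $x,y$.

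I expect the only genuinely delicate step to be spotting the substitution $a=1$ in Lemma~\ref{LEM1}$(2)$: once $a$ is set to $1$, axiom $(V_3)$ flattens both $a\ast(b\ast c)$ and $a\ast c$ to $x\ast y$ and $y$ respectively, and the rest is immediate. An alternative route through the original two-condition definition of an ideal would require manipulating the term $(s\ast(q\ast x))\ast x$ with the axioms $(V_1)$--$(V_4)$ and is more cumbersome, so I would favor the characterization in Lemma~\ref{LEM1}.
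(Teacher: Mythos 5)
Your proof is correct. Note that the paper itself offers no argument for this proposition (its ``proof'' is the single sentence ``It can be easily proved''), so there is nothing to compare against; your write-up actually supplies the missing argument. The route you chose---pass to the closed cut $C(f;t)$ at $t=\max\{f(x),f(x\ast y)\}$, observe it is a nonempty cut and hence an ideal by the definition of an $N$-ideal, then apply Lemma~\ref{LEM1}$(2)$ with the outer element set to $1$ so that $(V_3)$ collapses $1\ast(x\ast y)$ to $x\ast y$ and $1\ast y$ to $y$---is the natural one, and it is consistent with how the paper handles analogous statements (e.g.\ the cut-based definition of $N$-ideal is exactly what makes $f(1)\leq f(x)$ work in the preceding lemma). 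One small remark: your closing claim that the original two-condition definition of ideal would be ``more cumbersome'' is overstated. Taking $s=x\ast y$, $q=x$ in condition $(2)$ of that definition, applied at the element $y$, gives $((x\ast y)\ast(x\ast y))\ast y=1\ast y=y\in C(f;t)$ directly by $(V_1)$ and $(V_3)$, which is just as short as the route through Lemma~\ref{LEM1}. Either way, your argument is complete and valid.
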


\begin{proof}
It can be easily proved.
\end{proof}

\begin{corollary}
If $x\leq y$, then each $N$-ideal $f$ of BE-algebra $X$\ satisfies the
condition;\newline
$f(y)\leq f(x)$.
\end{corollary}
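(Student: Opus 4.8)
The plan is to obtain the conclusion directly from the two immediately preceding results, so the first move is to translate the hypothesis into algebraic form. By the definition of the relation ``$\leq$'' on a BE-algebra, $x\leq y$ means precisely that $x\ast y=1$. This is the only place where the hypothesis enters, and its role is to convert the general two-term maximum supplied by the previous proposition into something we can control.

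Next I would invoke the proposition stated just above the corollary, namely that every $N$-ideal $f$ satisfies $f(y)\leq \max\{f(x),f(x\ast y)\}$ for all $x,y\in X$. Specializing this inequality and substituting $x\ast y=1$ yields $f(y)\leq \max\{f(x),f(1)\}$. At this point the problem is reduced to evaluating the maximum on the right-hand side.

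The final step collapses that maximum. I would appeal to the earlier lemma asserting $f(1)\leq f(x)$ for all $x\in X$ (which itself comes from $x\ast x=1$ together with the cut characterization of $N$-ideals). Since $f(1)\leq f(x)$, we have $\max\{f(x),f(1)\}=f(x)$, and therefore $f(y)\leq f(x)$, as required.

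There is no substantial obstacle here: the statement is a one-line consequence of combining the proposition with the lemma once $x\leq y$ has been rewritten as $x\ast y=1$. The single point demanding care is the direction of the inequality in the lemma — one must use $f(1)\leq f(x)$, rather than its reverse, to discard the $f(1)$ term from the maximum. Because $1$ is the top element of $X$ (so that $f$ attains its least value there), this is exactly the direction that holds, and the argument goes through cleanly.
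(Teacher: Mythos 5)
Your argument is correct, but it follows a genuinely different route from the paper's own proof. The paper uses the hypothesis $x\ast y=1$ together with axiom $(V_{3})$ to rewrite $f(y)=f(1\ast y)=f((x\ast y)\ast y)$, and then finishes in one step by Proposition \ref{PRO6}, which gives $f((x\ast y)\ast y)\leq f(x)$. You instead start from the other preceding proposition, $f(y)\leq \max \{f(x),f(x\ast y)\}$, substitute $f(x\ast y)=f(1)$, and then collapse the maximum by appealing to the lemma $f(1)\leq f(x)$. Both derivations are valid one-line consequences of results already established in the paper: the paper's version is slightly more economical, needing only one cited proposition plus the axiom $1\ast y=y$, while yours needs two cited results (the max-inequality and the lemma on $f(1)$) but no BE-algebra axiom beyond the substitution itself, and it mirrors the standard order-reversal argument for fuzzy and $N$-ideals in BCK/BCI-algebras. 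Your closing caution about using the inequality $f(1)\leq f(x)$ in the correct direction is exactly the point on which your argument hinges, and it is secured by the paper's lemma, so the proof goes through.
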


\begin{proof}
Suppose $x\leq y$ for all $x,y\in X$. Then $x\ast y=1$, so 
\begin{equation*}
f(y)=f(1\ast y)=f((x\ast y)\ast y)
\end{equation*}%
By proposition \ref{PRO6}, $f((x\ast y)\ast y)\leq f(x)$, hence $f(y)\leq
f(x)$.\newline
\end{proof}

\section{$([e],[e]\vee \lbrack c_{k}])$-Ideals}

\begin{definition}
$($\cite{Kang}$)$ Let $f$ be an $N$-structure of of BE-algebra $X$ in wich $%
f $ is given by;\newline
$f(y)=\left\{ 
\begin{array}{ll}
0 & \text{if }y\neq x \\ 
t & \text{if \ }y=x%
\end{array}%
\right. $\newline
Where $\ t\in \lbrack -1,0)$, In this case, $f$ is represented by $\frac{x}{t%
}$. $(X,\frac{x}{t})$\ is called \textbf{Point $N$-structure}.
\end{definition}

\begin{definition}
$($\cite{Kang}$)$ A Point $N$-structure $(X,\frac{x}{t})$\ is called \textbf{%
Employed} in an $N$-structure $(X,f)$ of BE-algebra $X$\ if\ \ $f(x)\leq t$
for all $x\in X$, and $t\in \lbrack -1,0)$. It is represented as $(X,\frac{x%
}{t})[e](X,f)$\ or $\frac{x}{t}[e]f$.
\end{definition}

\begin{definition}
A point $N$-structure $(X,\frac{x}{t})$ is called $(k$ \textbf{Conditionally}%
$)$\textbf{\ Employed} in an $N$-structure $(X,f)$ if\ $f(x)+t+k+1<0$ for
all $x\in X$, $t\in \lbrack -1,0)$\ and $k\in (-1,0]$. It is denoted by $(X,%
\frac{x}{t})[c_{k}](X,f)$ or $\frac{x}{t}[c_{k}]f$.\newline
To say that $(X,\frac{x}{t})([e]\vee \lbrack c_{k}])(X,f)$ $($or briefly, $%
\frac{x}{t}([e]\vee \lbrack c_{k}])f)$ we mean $(X,\frac{x}{t})[e](X,f)$ or $%
(X,\frac{x}{t})[c_{k}](X,f)$ $($or briefly, $\frac{x}{t}[e]$ or $\frac{x}{t}%
[c_{k}]f)$. To say that $\frac{x}{t}\overline{\alpha }f$ we mean $\frac{x}{t}%
\alpha f$ does not hold for $\alpha \in \{[e],[c_{k}],[e]\vee \lbrack
c_{k}]\}$.
\end{definition}

\begin{definition}
An $N$-structure $(X,f)$ is called $([e],[e]\vee \lbrack c_{k}])$-ideal of $%
X $ if it satisfied;\newline
$(1)$ $\frac{y}{t}[e]f\Rightarrow \frac{x\ast y}{t}([e]\vee \lbrack c_{k}])f$%
,\newline
$(2)$\ $\frac{x}{t}[e]f$, $\frac{y}{r}[e]f\Rightarrow \frac{(x\ast (y\ast
z))\ast z}{\max \{t,r\}}([e]\vee \lbrack c_{k}])f$.\newline
for all $x,y,z\in X$, where $t,r\in \lbrack -1,0)$\ and $k\in (-1,0]$.
\end{definition}

\begin{example}
Let $X=\{1,\gamma ,0,m,\omega \}$ be a set with a multiplication table given
by;\newline
\begin{equation*}
\begin{tabular}{|l|l|l|l|l|l|}
\hline
$\ast $ & $1$ & $\gamma $ & $0$ & $m$ & $\omega $ \\ \hline
$1$ & $1$ & $\gamma $ & $0$ & $m$ & $\omega $ \\ \hline
$\gamma $ & $1$ & $1$ & $\gamma $ & $m$ & $m$ \\ \hline
$0$ & $1$ & $1$ & $1$ & $m$ & $m$ \\ \hline
$m$ & $1$ & $\gamma $ & $0$ & $1$ & $\gamma $ \\ \hline
$\omega $ & $1$ & $1$ & $\gamma $ & $1$ & $1$ \\ \hline
\end{tabular}%
\end{equation*}%
\newline
Let $(X,f)$ be an $N$-structure. Then $f$ is defined in an $N$-structure $%
(X,f)$, as;\newline
$f=\left( 
\begin{array}{ccccc}
1 & \gamma & 0 & m & \omega \\ 
-0.9 & -0.8 & -0.7 & -0.9 & -0.8%
\end{array}%
\right) $\newline
and$\ t,r\in \lbrack -0.7,-0.3)$, also $k\in (-1,-0.4)$.\newline
for all $x,y,z\in X$, the followings \newline
$(1)$\ $\frac{y}{t}[e]f\Rightarrow \frac{x\ast y}{t}([e]\vee \lbrack
c_{k}])f $,\newline
$(2)$ $\frac{x}{t}[e]f$, $\frac{y}{r}[e]f\Rightarrow \frac{(x\ast (y\ast
z))\ast z}{\max \{t\text{, }r\}}([e]\vee \lbrack c_{k}])f$.\newline
are hold. Hence, $f$\ is an $([e],[e]\vee \lbrack c_{k}])$-ideal of $X$.
\end{example}

\begin{theorem}
\label{TH4}For any $N$-structure $(X,f)$, the following are equivalent:%
\newline
$(1)$ $(X,f)$ is a $([e],[e]\vee \lbrack c_{k}])$-ideal of $X$. \newline
$(2)$ $(X,f)$ satisfies the following inequalities:\newline
$(2.1)$ $(\forall x,y\in X)$ $(\ f(x\ast y)\leq \max \{f(y),\frac{-k-1}{2}%
\}\ )$, \newline
$(2.2)$ $(\forall x,y,z\in X)$ $(\ f((x\ast (y\ast z))\ast z)\leq \max
\{f(x),f(y),\frac{-k-1}{2}\}\ )$. where $k\in (-1,0]$.
\end{theorem}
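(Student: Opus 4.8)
The plan is to prove the two implications $(1)\Rightarrow(2)$ and $(2)\Rightarrow(1)$ separately, after first unwinding the relations $[e]$ and $[c_k]$ into their numerical meaning: $\frac{a}{t}[e]f$ means $f(a)\le t$, while $\frac{a}{t}[c_k]f$ means $f(a)+t+k+1<0$. The value $\frac{-k-1}{2}$ should be read as the exact break-even point of the conditionally-employed relation, and the whole argument rests on one elementary observation: if a point structure $\frac{a}{t}$ has $t<\frac{-k-1}{2}$ and $f(a)\le\frac{-k-1}{2}$, then adding the two bounds gives $f(a)+t<-k-1$, i.e. $\frac{a}{t}[c_k]f$ automatically holds. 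I would record this remark up front and invoke it in both directions.

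For $(1)\Rightarrow(2)$ I would argue by contradiction for each inequality. To prove $(2.1)$, suppose there are $x,y$ with $f(x\ast y)>\max\{f(y),\frac{-k-1}{2}\}$; the subcase $f(y)=0$ is vacuous since $f(x\ast y)\le 0$, so assume $f(y)<0$. The idea is to select a single threshold $t$ sitting in the gap between the two hypotheses, so that the premise $\frac{y}{t}[e]f$ holds while neither part of the conclusion does. Concretely I would pick $t$ with
\[
\max\Bigl\{\,f(y),\;-f(x\ast y)-k-1\,\Bigr\}\le t<f(x\ast y),
\]
which lies in $[-1,0)$ and is a genuine interval precisely because the negated conclusion yields both $f(y)<f(x\ast y)$ and $f(x\ast y)>\frac{-k-1}{2}$ (the latter rearranging to $-f(x\ast y)-k-1<f(x\ast y)$). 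For this $t$ the premise holds, yet $t<f(x\ast y)$ kills $[e]$ and $t\ge -f(x\ast y)-k-1$ kills $[c_k]$, contradicting clause $(1)$ of the ideal definition. The proof of $(2.2)$ is identical with $w:=(x\ast(y\ast z))\ast z$ and a common threshold $s$ chosen in $\max\{f(x),f(y),-f(w)-k-1\}\le s<f(w)$, taking $t=r=s$ so that $\max\{t,r\}=s$.

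For the converse $(2)\Rightarrow(1)$ I would verify each defining clause directly by case analysis. Assuming $\frac{y}{t}[e]f$, i.e. $f(y)\le t$, condition $(2.1)$ gives $f(x\ast y)\le\max\{f(y),\frac{-k-1}{2}\}$. If that maximum is $f(y)$ then $f(x\ast y)\le f(y)\le t$, so $\frac{x\ast y}{t}[e]f$. Otherwise $f(x\ast y)\le\frac{-k-1}{2}$; if $t\ge f(x\ast y)$ we are again in the $[e]$ case, and if $t<f(x\ast y)\le\frac{-k-1}{2}$ then the break-even observation gives $f(x\ast y)+t+k+1<0$, i.e. $\frac{x\ast y}{t}[c_k]f$. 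In every case $\frac{x\ast y}{t}([e]\vee[c_k])f$, establishing clause $(1)$. The second clause follows by the same split applied to $(2.2)$, with $s=\max\{t,r\}$ in the role of $t$ and using $\max\{f(x),f(y)\}\le s$.

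The two direct case-splits in $(2)\Rightarrow(1)$ are routine. The genuinely delicate step is the threshold selection in $(1)\Rightarrow(2)$: one must check that the admissible interval for $t$ (resp. $s$) is nonempty and contained in $[-1,0)$. This is exactly where \emph{both} strict inequalities coming from the negated conclusion are needed, and it is what forces the quantity $\frac{-k-1}{2}$ to appear as the midpoint dictated by the $[c_k]$ relation.
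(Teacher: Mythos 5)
Your proof is correct and takes essentially the same route as the paper's: the forward direction is a contradiction argument that exhibits a threshold making the premise $[e]$ hold while both $[e]$ and $[c_{k}]$ fail for the conclusion (the paper simply fixes the specific value $t_{y}=\max \{f(y),\frac{-k-1}{2}\}$, which lies inside your admissible interval), and the converse is the same case split around the break-even value $\frac{-k-1}{2}$, with your ``break-even observation'' matching the paper's computation $f(x\ast y)+t+k+1<2f(x\ast y)+k+1\leq 0$. No gaps; your explicit treatment of the degenerate case $f(y)=0$ (needed so that a valid $t\in \lbrack -1,0)$ exists) is in fact slightly more careful than the paper's own write-up.
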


\begin{proof}
Let $(X,f)$ be a $([e],[e]\vee \lbrack c_{k}])$-ideal of $X$. Suppose that $%
f(x\ast y)>\max \{f(y),\frac{-k-1}{2}\}$ for all $x,y\in X$. If we take $%
t_{y}:=\max \{f(y),\frac{-k-1}{2}\}$, $t_{y}\in \lbrack \frac{-k-1}{2},0]$, $%
\frac{y}{t_{y}}[e]f$ and $\frac{x\ast y}{t_{y}}[\overline{e}]f$. Also,\ $%
f(x\ast y)+t_{y}+k+1>2t_{y}+1\geq 0$, and so $\frac{x\ast y}{t_{y}}[%
\overline{c_{k}}]f$. This is a contradiction. Thus $f(x\ast y)\leq \max
\{f(y),\frac{-k-1}{2}\}$ for all $x,y\in X$. Also suppose that $\ f((x\ast
(y\ast z))\ast z)>\max \{f(x),f(y),\frac{-k-1}{2}\}$ for some $x,y,z\in X$.
Take $t:=\max \{f(x),f(y),\frac{-k-1}{2}\}$. Then $t\geq \frac{-k-1}{2}$,$%
\frac{x}{t}[e]f$ and $\frac{y}{t}[e]f$, but $\frac{x\ast (y\ast z))\ast z}{t}%
[\overline{e}]f$. Also, $f((x\ast (y\ast z)\ast z)+t+k+1>2t+k+1\geq 0$,
i.e., $\frac{x\ast (y\ast z))\ast z}{t}[\overline{c_{k}}]f$. This is a
contradiction, and hence $f((x\ast (y\ast z))\ast z)\leq \max \{f(x),f(y),%
\frac{-k-1}{2}\}$ for all $x,y,z\in X$.\newline
Conversely, suppose that $(X,f)$ satisfies $(2.1)$ and $(2.2)$. Let $\frac{y%
}{t}[e]$ for all $y\in X$ and $t\in \lbrack -1,0)$. Then $f(y)\leq t$.
Suppose that $\frac{x\ast y}{t}[\overline{e}]f$, i.e, $f(x\ast y)>t$. If $%
f(y)>\frac{-k-1}{2}$, then \ $f(x\ast y)\leq \max \{f(y),\frac{-k-1}{2}%
\}=f(y)\leq t$, which is a contradiction. Hence $f(y)\leq \frac{-k-1}{2}$,
which implies that $f(x\ast y)+t+k+1<2f(x\ast y)+k+1\leq 2\max \{f(y),\frac{%
-k-1}{2}\}+k+1=0$, i.e., $\frac{x\ast y}{t}[c_{k}]f$. Thus $\frac{x\ast y}{t}%
([e]\vee \lbrack c_{k}])f$. Let $\frac{x}{t}[e]f$ and $\frac{y}{r}[e]f$ for
all$\ x,y,z\in X$ and $t,r\in \lbrack -1,0)$. Then $f(x)\leq t$ and $%
f(y)\leq r$. Suppose that $\frac{(x\ast y\ast z))\ast z}{\max \{t,r\}}[%
\overline{e}]f$, i.e., $f((x\ast (y\ast z))\ast z)>\max \{t,r\}$. If $\max
\{f(x),f(y)\}>\frac{-k-1}{2}$, then 
\begin{equation*}
f((x\ast (y\ast z))\ast z)\leq \max \{f(x),f(y),\frac{-k-1}{2}\}=\max
\{f(x),f(y)\}\leq \max \{t,r\}\text{.}
\end{equation*}%
This is impossible, and so $\max \{f(x),f(y)\}\leq \frac{-k-1}{2}$. It
follows that $f((x\ast (y\ast z))\ast z)+\max \{t,r\}+k+1<2f((x\ast (y\ast
z))\ast z)+k+1\leq 2\max \{f(x),f(y),\frac{-k-1}{2}\}+k+1=0$\newline
$\Rightarrow \frac{(x\ast y\ast z))\ast z}{\max \{t,r\}}[\overline{c_{k}}]f$%
. Hence $\frac{(x\ast y\ast z))\ast z}{\max \{t,r\}}([e]\vee \lbrack
c_{k}])f $, and therefore $(X,f)$ is a $([e],[e]\vee \lbrack c_{k}])$-ideal
of $X$.
\end{proof}

If $(k=0)$, then the followig holds.

\begin{corollary}
For any $N$-structure $(X,f)$, the following are equivalent:\newline
$(1)$ $(X,f)$ is a $([e],[e]\vee \lbrack c])$-ideal of $X$. \newline
$(2)$ $(X,f)$ satisfies the following inequalities:\newline
$(2.1)$ $(\forall x,y\in X)$ $(\ f(x\ast y)\leq \max \{f(y),-0.5\}\ )$, 
\newline
\end{corollary}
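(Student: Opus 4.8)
The plan is to obtain this statement immediately as the $k=0$ instance of Theorem \ref{TH4}, rather than re-running the contrapositive argument from scratch. First I would record the two substitutions that occur when $k=0$. On the one hand, the defining inequality $f(x)+t+k+1<0$ of the ($k$ conditionally) employment relation $[c_{k}]$ collapses to $f(x)+t+1<0$, which is exactly the (unconditional) relation $[c]$ of \cite{Kang}; consequently $[c_{0}]$ and $[c]$ coincide, and an $([e],[e]\vee \lbrack c_{0}])$-ideal is by definition the same object as an $([e],[e]\vee \lbrack c])$-ideal. On the other hand, the threshold $\frac{-k-1}{2}$ appearing throughout Theorem \ref{TH4} evaluates to $\frac{-0-1}{2}=-0.5$ at $k=0$.

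With these identifications in hand, the equivalence $(1)\Leftrightarrow (2)$ of Theorem \ref{TH4} specializes verbatim. Statement $(1)$ becomes ``$(X,f)$ is an $([e],[e]\vee \lbrack c])$-ideal of $X$,'' and the inequalities $(2.1)$, $(2.2)$ become
\begin{equation*}
f(x\ast y)\leq \max \{f(y),-0.5\}\quad \text{and}\quad f((x\ast (y\ast z))\ast z)\leq \max \{f(x),f(y),-0.5\},
\end{equation*}
for all $x,y,z\in X$, which are precisely the asserted conditions. Thus the corollary follows by applying Theorem \ref{TH4} with the parameter $k$ fixed at $0$.

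The only point that warrants a check --- and it is where I would focus attention --- is that $k=0$ is a genuinely admissible value for the argument of Theorem \ref{TH4}, i.e.\ that the proof there does not secretly require $k<0$. Since the hypothesis allows $k\in (-1,0]$, the endpoint $k=0$ is included, and one verifies that every inequality used stays valid at the boundary: the forward direction relies on estimates such as $2t_{y}+1\geq 0$ and $2t+k+1\geq 0$ for $t,t_{y}\geq \frac{-k-1}{2}$, each of which holds (with the appropriate non-strict sign) when $k=0$, while the converse direction's chain terminating in $2\max \{f(y),\frac{-k-1}{2}\}+k+1=0$ reduces to the valid identity $2(-0.5)+1=0$. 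Hence no degeneracy arises at $k=0$, and the specialization is legitimate.
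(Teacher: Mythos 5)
Your proposal is correct and is exactly the paper's intended route: the paper states this corollary with no proof at all, presenting it as the immediate $k=0$ specialization of Theorem \ref{TH4}, which is precisely what you carry out (including the identification $[c_{0}]=[c]$ and $\frac{-k-1}{2}=-0.5$, plus the admissibility check at the endpoint, which the paper leaves implicit). Note only that the corollary as printed truncates condition $(2)$ after $(2.1)$ --- evidently a typographical omission of $(2.2)$ --- and your specialization correctly supplies both inequalities.
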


\begin{theorem}
\label{TH5} Every $([e],[e]\vee \lbrack c_{k}])$-ideal $(X,f)$ of an
BE-algebra $X$ satisfies the following inequalities: \newline
$(1)$ $(\forall x\in X)$.$(\ f(1)\leq \max \{f(x),\frac{-k-1}{2}\}\ )$,%
\newline
$(2)$ $(\forall x,y\in X)$ $(\ f((x\ast y)\ast y)\leq \max \{f(x),\frac{-k-1%
}{2}\}\ )$. \ where $k\in (-1,0]$.
\end{theorem}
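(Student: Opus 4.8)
The plan is to reduce both inequalities to the characterization already proved in Theorem \ref{TH4}, so that no direct argument with point $N$-structures is required. Since $(X,f)$ is assumed to be an $([e],[e]\vee \lbrack c_{k}\rbrack)$-ideal, Theorem \ref{TH4} immediately hands us the two inequalities
\begin{equation*}
(2.1)\ \ f(x\ast y)\leq \max \{f(y),\tfrac{-k-1}{2}\}\quad\text{and}\quad (2.2)\ \ f((x\ast (y\ast z))\ast z)\leq \max \{f(x),f(y),\tfrac{-k-1}{2}\}
\end{equation*}
for all $x,y,z\in X$. Both parts of the theorem will then follow from well-chosen substitutions in these two inequalities together with the BE-algebra axioms $(V_{1})$--$(V_{4})$.

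For part $(1)$, I would specialize $(2.1)$ by taking $y:=x$. Axiom $(V_{1})$ gives $x\ast x=1$, so $f(1)=f(x\ast x)\leq \max \{f(x),\tfrac{-k-1}{2}\}$, which is exactly the claimed inequality. This step is immediate.

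For part $(2)$, I would apply $(2.2)$ with the substitution $x\mapsto 1$, $y\mapsto x$, $z\mapsto y$. The left-hand term $(1\ast (x\ast y))\ast y$ collapses via axiom $(V_{3})$, namely $1\ast (x\ast y)=x\ast y$, to $(x\ast y)\ast y$, so $(2.2)$ yields $f((x\ast y)\ast y)\leq \max \{f(1),f(x),\tfrac{-k-1}{2}\}$.

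The only step needing a little care is discarding the spurious $f(1)$ term from this maximum, and this is where I would invoke part $(1)$: since $f(1)\leq \max \{f(x),\tfrac{-k-1}{2}\}$, the entry $f(1)$ cannot exceed the other two entries, so $\max \{f(1),f(x),\tfrac{-k-1}{2}\}=\max \{f(x),\tfrac{-k-1}{2}\}$. Substituting this back gives $f((x\ast y)\ast y)\leq \max \{f(x),\tfrac{-k-1}{2}\}$, completing part $(2)$. I expect no genuine obstacle here beyond tracking the variable substitutions correctly and respecting the (non-commutative) order of arguments in the BE-operation; the proof is essentially a bookkeeping exercise once Theorem \ref{TH4} is in hand.
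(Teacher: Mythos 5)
Your proposal is correct and matches the paper's proof in essence: both parts are deduced from Theorem \ref{TH4}, with part $(1)$ obtained from $(2.1)$ via $x\ast x=1$, and part $(2)$ obtained from $(2.2)$ by inserting a $1$ that collapses under $(V_{3})$ and then discarding $f(1)$ from the maximum using part $(1)$. The only (immaterial) difference is that the paper substitutes $y\mapsto 1$, $z\mapsto y$ to get the term $(x\ast(1\ast y))\ast y$, whereas you substitute $x\mapsto 1$ to get $(1\ast(x\ast y))\ast y$; both collapse to $(x\ast y)\ast y$ and give the same bound.
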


\begin{proof}
$(1)$: By using $(V_{1})$ and theorem \ref{TH4}$(2.1)$, we have%
\begin{equation*}
f(1)=f(x\ast x)\leq \max \{f(x),\frac{-k-1}{2}\}
\end{equation*}%
for all $x\in X$.\newline
$(2)$: By using $(V_{3})$, we have $f((x\ast y)\ast y)=f((x\ast (1\ast
y))\ast y)$ for all $x,y\in X$\newline
Then by using theorem \ref{TH4}$(2.2)$, we get\newline
$f((x\ast (1\ast y))\ast y)\leq \max \{f(x),f(1),\frac{-k-1}{2}\}=\max
\{f(x),\frac{-k-1}{2}\}$, because by $(1)$ $f(1)\leq \max \{f(x),\frac{-k-1}{%
2}\}$ for all $x,y\in X$.\newline
Hence, $f((x\ast y)\ast y)\leq \max \{f(x),\frac{-k-1}{2}\}$ for all $x,y\in
X$.
\end{proof}

If $(k=0)$, then the followig holds.

\begin{corollary}
Every $([e],[e]\vee \lbrack c])$-ideal $(X,f)$ of an BE-algebra $X$
satisfies the following inequalities: \newline
$(1)$ $(\forall x\in X)$.$(\ f(1)\leq \max \{f(x),-0.5\}\ )$,\newline
$(2)$ $(\forall x,y\in X)$ $(\ f((x\ast y)\ast y)\leq \max \{f(x),-0.5\}\ )$.
\end{corollary}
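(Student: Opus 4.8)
The plan is to derive both inequalities directly from the characterization in Theorem \ref{TH4}, using the BE-algebra axioms to reshape the arguments into the forms required by $(2.1)$ and $(2.2)$. Since $(X,f)$ is assumed to be a $([e],[e]\vee \lbrack c_{k}])$-ideal, Theorem \ref{TH4} lets me work entirely with the two inequalities $(2.1)$ and $(2.2)$ rather than with the point-$N$-structure definition.

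For part $(1)$ I would start from axiom $(V_{1})$, which gives $1=x\ast x$ for every $x\in X$. Applying $(2.1)$ with $y$ replaced by $x$ then yields $f(1)=f(x\ast x)\leq \max \{f(x),\frac{-k-1}{2}\}$ at once, so this part is essentially immediate.

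For part $(2)$ the idea is to rewrite $(x\ast y)\ast y$ so that $(2.2)$ becomes applicable. Using $(V_{3})$, namely $1\ast y=y$, I can write $(x\ast y)\ast y=(x\ast (1\ast y))\ast y$, which matches the pattern $(x\ast (y'\ast z))\ast z$ of $(2.2)$ with the middle variable instantiated to $1$ and $z=y$. This produces $f((x\ast y)\ast y)\leq \max \{f(x),f(1),\frac{-k-1}{2}\}$.

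The only step requiring a little care — and the single place where the two parts interact — is eliminating the stray $f(1)$ from this maximum. Here I would invoke part $(1)$, which guarantees $f(1)\leq \max \{f(x),\frac{-k-1}{2}\}$, so that the $f(1)$ term is absorbed and $\max \{f(x),f(1),\frac{-k-1}{2}\}=\max \{f(x),\frac{-k-1}{2}\}$. Chaining this with the previous inequality gives $(2)$. I expect no genuine obstacle beyond establishing $(1)$ first and then reusing it inside $(2)$; the remainder is a mechanical application of the characterization together with the axioms $(V_{1})$ and $(V_{3})$.
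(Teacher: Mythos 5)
Your proposal is correct and is essentially identical to the paper's own argument: the paper obtains this corollary as the $k=0$ instance of Theorem \ref{TH5}, whose proof uses exactly your steps — $(V_{1})$ together with Theorem \ref{TH4}$(2.1)$ for part $(1)$, then $(V_{3})$ to rewrite $(x\ast y)\ast y=(x\ast (1\ast y))\ast y$, Theorem \ref{TH4}$(2.2)$, and part $(1)$ to absorb the $f(1)$ term for part $(2)$. Specializing your bound $\frac{-k-1}{2}$ at $k=0$ gives the stated constant $-0.5$, so nothing further is needed.
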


\begin{corollary}
Each $([e],[e]\vee \lbrack c_{k}])$-ideal $(X,f)$ satisfies the following
condition;\newline
$(\forall x,y\in X)$ $(\ x\leq y\Rightarrow f(y)\leq \max \{f(x),\frac{-k-1}{%
2}\}\ )$. \ \ \ where $k\in (-1,0]$.
\end{corollary}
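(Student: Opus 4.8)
The plan is to reduce this corollary directly to Theorem~\ref{TH5}$(2)$, which already gives the inequality $f((x\ast y)\ast y)\leq \max\{f(x),\frac{-k-1}{2}\}$ for every $([e],[e]\vee[c_{k}])$-ideal. The only work is to rewrite the left-hand side $f(y)$ in the form $f((x\ast y)\ast y)$ using the hypothesis $x\leq y$, exactly as in the earlier $N$-ideal corollary that relied on Proposition~\ref{PRO6}.

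First I would unpack the order relation via Definition of ``$\leq$'': assuming $x\leq y$ means precisely $x\ast y=1$. Then I would invoke axiom $(V_{3})$, namely $1\ast y=y$, to chain the equalities
\begin{equation*}
f(y)=f(1\ast y)=f((x\ast y)\ast y),
\end{equation*}
where the last step substitutes $x\ast y=1$. This identity is the crux of the argument; everything else is an appeal to a prior result.

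Next I would apply Theorem~\ref{TH5}$(2)$ to the term $f((x\ast y)\ast y)$, which yields $f((x\ast y)\ast y)\leq \max\{f(x),\frac{-k-1}{2}\}$ for all $x,y\in X$ with $k\in(-1,0]$. Combining this with the displayed identity gives $f(y)\leq \max\{f(x),\frac{-k-1}{2}\}$, completing the proof.

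I do not anticipate any genuine obstacle here, since the statement is a routine specialization: the content is entirely carried by Theorem~\ref{TH5}$(2)$, and the corollary merely interprets the hypothesis $x\leq y$ so that $(x\ast y)\ast y$ collapses to $y$. The one point to state carefully is the use of $(V_{3})$ together with $x\ast y=1$; once that substitution is justified, the bound follows immediately.
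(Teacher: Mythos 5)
Your proposal is correct and coincides with the paper's own proof: both reduce $x\leq y$ to $x\ast y=1$, rewrite $f(y)=f(1\ast y)=f((x\ast y)\ast y)$ via $(V_{3})$, and then apply Theorem~\ref{TH5}$(2)$ to obtain the bound $\max\{f(x),\frac{-k-1}{2}\}$. No gap; the argument is exactly the paper's.
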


\begin{proof}
Let $x\leq y$ for all $x,y\in X$. Then $x\ast y=1$,and so%
\begin{equation*}
f(y)=f(1\ast y)=f((x\ast y)\ast y)\leq \max \{f(x),\frac{-k-1}{2}\}
\end{equation*}%
\newline
Hence, $f(y)\leq \max \{f(x),\frac{-k-1}{2}\}$.
\end{proof}

If $(k=0)$, then the followig holds.

\begin{lemma}
Each $([e],[e]\vee \lbrack c])$-ideal $(X,f)$ satisfies the following
condition;\newline
$(\forall x,y\in X)$ $(\ x\leq y\Rightarrow f(y)\leq \max \{f(x),-0.5\}\ )$.
\end{lemma}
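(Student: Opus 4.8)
The plan is to recognize that an $([e],[e]\vee \lbrack c])$-ideal is precisely an $([e],[e]\vee \lbrack c_{k}])$-ideal in the special case $k=0$, so the assertion is nothing more than the $k=0$ instance of the Corollary stated immediately above this lemma, with the quantity $\frac{-k-1}{2}$ collapsing to $\frac{-0-1}{2}=-0.5$. Thus the cleanest route is to simply specialize that Corollary and record the resulting inequality; no genuinely new computation is needed.

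For a self-contained argument I would instead mirror the proof of that Corollary directly. First I would assume $x\leq y$ for $x,y\in X$; by the definition of the relation ``$\leq$'' this is exactly the statement $x\ast y=1$. Next, using axiom $(V_{3})$, namely $1\ast x=x$, I would rewrite
\[
f(y)=f(1\ast y)=f((x\ast y)\ast y).
\]
Finally I would invoke the $k=0$ form of Theorem \ref{TH5}$(2)$ — that is, the Corollary asserting that $f((x\ast y)\ast y)\leq \max \{f(x),-0.5\}$ for all $x,y\in X$ — to conclude that $f(y)\leq \max \{f(x),-0.5\}$, which is precisely the desired condition.

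I do not expect any real obstacle here: the only points to verify are that $\frac{-k-1}{2}$ indeed equals $-0.5$ when $k=0$, and that the chain of equalities through $(V_{3})$ is legitimate, both of which are immediate. All of the substantive content has already been carried by Theorem \ref{TH5} and its $k=0$ corollary, so the lemma follows at once as a routine specialization.
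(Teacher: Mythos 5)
Your proposal is correct and takes essentially the same route as the paper: the paper gives no separate proof for this lemma, presenting it simply as the $k=0$ specialization of the preceding corollary (prefaced by ``If $(k=0)$, then the following holds''), and that corollary's proof is exactly the chain $f(y)=f(1\ast y)=f((x\ast y)\ast y)\leq \max \{f(x),\frac{-k-1}{2}\}$ that you reproduce. Both your specialization argument and your self-contained version coincide with the paper's treatment.
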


\begin{proposition}
\label{PRO2}Let $(X,f)$ be an $N$-structure such that\newline
$(1)$ $(\forall x\in X)$ $(\ f(1)\leq \max \{f(x),\frac{-k-1}{2}\}\ )$,%
\newline
$(2)$ $(\forall x,y,z\in X)$ $(\ f(x\ast z)\leq \max \{f(x\ast (y\ast
z)),f(y),\frac{-k-1}{2}\}\ )$.\newline
Then the following implication is valid.\newline
$(\forall x,y\in X)$ $(\ x\leq y\Rightarrow f(y)\leq \max \{f(x),\frac{-k-1}{%
2}\}\ )$. \ \ where $k\in (-1,0]$.
\end{proposition}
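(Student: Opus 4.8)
The plan is to convert the hypothesis into the algebraic identity it encodes and then to recognize condition $(2)$ of the proposition as exactly the tool that exploits it. Since $x\leq y$ means $x\ast y=1$ by the order relation on a BE-algebra, the genuine content of the statement is: given that $x\ast y=1$, bound $f(y)$ above by $\max\{f(x),\frac{-k-1}{2}\}$. The entire argument will hinge on a single well-chosen instantiation of $(2)$, after which only $(V_{3})$ and condition $(1)$ are needed to finish.

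First I would use axiom $(V_{3})$ to write $f(y)=f(1\ast y)$, so that $f(y)$ is displayed in the shape $f(\,\cdot\ast\cdot\,)$ that sits on the left-hand side of condition $(2)$. Then I would invoke condition $(2)$ in the instance where its first, second and third quantified arguments are taken to be $1$, $x$ and $y$ respectively. This produces
\begin{equation*}
f(y)=f(1\ast y)\leq \max\Bigl\{\, f\bigl(1\ast(x\ast y)\bigr),\, f(x),\, \frac{-k-1}{2}\,\Bigr\}.
\end{equation*}
Applying $(V_{3})$ once more gives $f\bigl(1\ast(x\ast y)\bigr)=f(x\ast y)$, and since $x\ast y=1$ this is simply $f(1)$. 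Hence at this stage $f(y)\leq \max\{f(1),f(x),\frac{-k-1}{2}\}$.

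The final step is to absorb the term $f(1)$ into the remaining two. By condition $(1)$ of the proposition we have $f(1)\leq \max\{f(x),\frac{-k-1}{2}\}$, so adjoining $f(1)$ to the maximum of $f(x)$ and $\frac{-k-1}{2}$ leaves it unchanged, that is $\max\{f(1),f(x),\frac{-k-1}{2}\}=\max\{f(x),\frac{-k-1}{2}\}$. Combining this with the inequality above yields the desired conclusion $f(y)\leq \max\{f(x),\frac{-k-1}{2}\}$.

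The only non-routine decision is the choice of substitution in $(2)$: one must recognize that putting $1$ in the first slot and $x$ in the auxiliary (middle) slot is precisely what converts the generic bound into one involving $f(1)$ and $f(x)$, so that the hypothesis $x\ast y=1$ and condition $(1)$ can be brought to bear. Everything else ($V_{3}$ and the absorption of $f(1)$) is mechanical. It is worth noting that this proposition re-derives, from the abstract inequalities $(1)$ and $(2)$ alone, the same implication obtained earlier for $([e],[e]\vee[c_{k}])$-ideals, so the argument parallels that corollary but does not presuppose that $(X,f)$ is an ideal.
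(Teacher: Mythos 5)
Your proposal is correct and follows essentially the same route as the paper: both rewrite $f(y)=f(1\ast y)$ via $(V_3)$, instantiate condition $(2)$ with arguments $1,x,y$, reduce $f(1\ast(x\ast y))$ to $f(1)$ using $x\ast y=1$, and absorb $f(1)$ by condition $(1)$. The only difference is the trivial order in which $(V_3)$ and the substitution $x\ast y=1$ are applied inside the maximum.
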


\begin{proof}
Suppose $x\leq y$ for all $x,y\in X$. Then $x\ast y=1$, and by using $(1)$
we get%
\begin{eqnarray*}
f(y) &=&f(1\ast y)\leq \max \{f(1\ast (x\ast y)),f(x),\frac{-k-1}{2}\} \\
&=&\max \{f(1\ast 1),f(x),\frac{-k-1}{2}\} \\
&=&\max \{f(1),f(x),\frac{-k-1}{2}\} \\
&=&\max \{f(x),\frac{-k-1}{2}\}
\end{eqnarray*}%
Hence, $f(y)\leq \max \{f(x),\frac{-k-1}{2}\}$.
\end{proof}

If $(k=0)$, then the followig holds.

\begin{lemma}
Let $(X,f)$ be an $N$-structure such that\newline
$(1)$ $(\forall x\in X)$ $(\ f(1)\leq \max \{f(x),-0.5\}\ )$,\newline
$(2)$ $(\forall x,y,z\in X)$ $(\ f(x\ast z)\leq \max \{f(x\ast (y\ast
z)),f(y),-0.5\}\ )$.\newline
Then the following implication is valid.\newline
$(\forall x,y\in X)$ $(\ x\leq y\Rightarrow f(y)\leq \max \{f(x),-0.5\}\ )$.
\end{lemma}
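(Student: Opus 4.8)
The plan is to recognize this lemma as the $k=0$ instance of Proposition \ref{PRO2}. Setting $k=0$ gives $\frac{-k-1}{2}=\frac{-1}{2}=-0.5$, so hypotheses $(1)$ and $(2)$ and the desired conclusion become exactly those of Proposition \ref{PRO2} after that substitution. Hence one legitimate route is simply to invoke Proposition \ref{PRO2} with $k=0$. For completeness I would nonetheless reproduce the short direct argument, since it relies only on the BE-algebra axioms together with the two stated hypotheses.

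First I would assume $x\leq y$ for arbitrary $x,y\in X$, which by the definition of $\leq$ means $x\ast y=1$. Using axiom $(V_{3})$ to write $f(y)=f(1\ast y)$, I would then apply hypothesis $(2)$ with the instantiation $x\mapsto 1$, $y\mapsto x$, $z\mapsto y$, giving
\[
f(y)=f(1\ast y)\leq \max \{f(1\ast (x\ast y)),f(x),-0.5\}.
\]
Next I would simplify the first entry of the maximum: since $x\ast y=1$ we have $1\ast (x\ast y)=1\ast 1$, and by $(V_{1})$ (equivalently $(V_{2})$) $1\ast 1=1$, so $f(1\ast (x\ast y))=f(1)$. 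This reduces the bound to $\max \{f(1),f(x),-0.5\}$.

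Finally I would absorb the $f(1)$ term using hypothesis $(1)$, which with $k=0$ reads $f(1)\leq \max \{f(x),-0.5\}$; consequently $\max \{f(1),f(x),-0.5\}=\max \{f(x),-0.5\}$, and therefore $f(y)\leq \max \{f(x),-0.5\}$, as required.

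There is no genuine obstacle here, as the argument is a direct specialization. The only step requiring care is choosing the correct substitution into hypothesis $(2)$ so that, after using $x\ast y=1$ and $1\ast 1=1$, the right-hand side collapses to a maximum involving only $f(1)$, $f(x)$ and the constant $-0.5$; the subsequent absorption of $f(1)$ via hypothesis $(1)$ is then immediate.
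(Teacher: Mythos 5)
Your proposal is correct and takes essentially the same route as the paper: the paper states this lemma without separate proof, precisely as the $k=0$ specialization of Proposition \ref{PRO2}, and your direct argument (write $f(y)=f(1\ast y)$, apply hypothesis $(2)$ with $x\mapsto 1$, $y\mapsto x$, $z\mapsto y$, collapse $1\ast(x\ast y)=1\ast 1=1$, then absorb $f(1)$ via hypothesis $(1)$) reproduces the paper's proof of that proposition with $\frac{-k-1}{2}$ replaced by $-0.5$.
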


\begin{theorem}
\label{TH6}Let $(X,f)$ be an $N$-structure of transitive BE-algebra $X$.
Then $(X,f)$ is an $([e],[e]\vee \lbrack c_{k}])$-ideal of $X$ if and only
if it satisfies the following inequalities:\newline
$(1)$ $(\forall x\in X)$ $(\ f(1)\leq \max \{f(x),\frac{-k-1}{2}\}\ )$,%
\newline
$(2)$ $(\forall x,y,z\in X)$ $(\ f(x\ast z)\leq \max \{f(x\ast (y\ast
z)),f(y),\frac{-k-1}{2}\}\ )$. \ \ where $k\in (-1,0]$.
\end{theorem}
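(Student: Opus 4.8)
The plan is to read the statement as a biconditional and to route both directions through Theorem \ref{TH4}, which identifies the property of being an $([e],[e]\vee[c_k])$-ideal with the two inequalities $(2.1)$ $f(x\ast y)\leq\max\{f(y),\frac{-k-1}{2}\}$ and $(2.2)$ $f((x\ast(y\ast z))\ast z)\leq\max\{f(x),f(y),\frac{-k-1}{2}\}$. The whole problem then reduces to showing that, for an $N$-structure on $X$, the pair $(2.1),(2.2)$ is equivalent to the pair $(1),(2)$ displayed in the theorem. Writing $\theta:=\frac{-k-1}{2}$ for brevity, I would carry out all the reductions by choosing convenient instantiations of the universally quantified variables and simplifying the resulting $\ast$-expressions with the axioms $(V_1)$--$(V_4)$.

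For the direction from $([e],[e]\vee[c_k])$-ideal to $(1),(2)$, condition $(1)$ is immediate, being Theorem \ref{TH5}$(1)$ (equivalently, it follows from $(2.1)$ with $y:=x$ via $x\ast x=1$). For $(2)$ I would instantiate $(2.2)$ at the triple $x\ast(y\ast z)$, $y$, $x\ast z$: applying $(V_4)$ to rewrite $y\ast(x\ast z)=x\ast(y\ast z)$, then $(V_1)$ to collapse $(x\ast(y\ast z))\ast(x\ast(y\ast z))=1$, and finally $(V_3)$ to strip the leading $1$, the left-hand argument simplifies exactly to $x\ast z$. This yields $f(x\ast z)\leq\max\{f(x\ast(y\ast z)),f(y),\theta\}$, which is $(2)$.

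For the converse I would assume $(1),(2)$ and recover $(2.1),(2.2)$. To get $(2.1)$ I instantiate $(2)$ at $x$, $y$, $y$; since $x\ast(y\ast y)=x\ast 1=1$ by $(V_2)$, this gives $f(x\ast y)\leq\max\{f(1),f(y),\theta\}$, and $(1)$ (at $y$) absorbs the $f(1)$ term to leave $f(x\ast y)\leq\max\{f(y),\theta\}$. The core of this direction is $(2.2)$. I would first establish the auxiliary bound $f((x\ast u)\ast u)\leq\max\{f(x),\theta\}$ by instantiating $(2)$ at $x\ast u$, $x$, $u$, where $(x\ast u)\ast(x\ast u)=1$ turns the first max-entry into $f(1)$, again absorbed by $(1)$. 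Then, setting $W:=x\ast(y\ast z)$, I instantiate $(2)$ at $W$, $y$, $z$ to get $f(W\ast z)\leq\max\{f(W\ast(y\ast z)),f(y),\theta\}$ and bound $f(W\ast(y\ast z))=f((x\ast(y\ast z))\ast(y\ast z))$ using the auxiliary inequality with $u:=y\ast z$; one application of monotonicity of $\max$ then delivers $(2.2)$.

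I expect the main obstacle to be the bookkeeping in this last step: threading the floor $\theta$ through the nested maxima so that the auxiliary $\theta$'s produced at each instantiation of $(2)$ never inflate the final bound, and verifying that every $\ast$-simplification is genuinely licensed by $(V_1)$--$(V_4)$. The transitivity hypothesis is the feature I would expect the intended proof to lean on at exactly this point — most plausibly through the monotonicity principle $x\leq y\Rightarrow f(y)\leq\max\{f(x),\theta\}$ of Proposition \ref{PRO2} (fed facts such as $y\leq x\ast y$ and $w\leq(y\ast w)\ast w$) or through the cut-to-ideal correspondence underlying Lemma \ref{LEM1}. As a consistency check I would re-derive $(2.1)$ via that monotonicity principle as well; should the direct substitutions above close the argument without ever invoking transitivity, that would simply indicate the transitivity assumption is more than this particular characterization strictly needs.
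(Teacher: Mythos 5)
Your proposal is correct, and it takes a genuinely different --- in fact sharper --- route than the paper's own proof. Both arguments pass through Theorem \ref{TH4}, and your derivation of $(2.1)$ and of the auxiliary bound $f((x\ast u)\ast u)\leq \max \{f(x),\frac{-k-1}{2}\}$ coincides with steps the paper also performs; the divergence is that the paper invokes transitivity in both directions and you never do. In the forward direction the paper uses transitivity to obtain the identity $((y\ast z)\ast z)\ast ((x\ast (y\ast z))\ast (x\ast z))=1$, rewrites $f(x\ast z)=f(1\ast (x\ast z))$ with this expression substituted for $1$, and then applies Theorem \ref{TH4}$(2.2)$ together with Theorem \ref{TH5}$(2)$; your single instantiation of $(2.2)$ at the triple $(x\ast (y\ast z),\,y,\,x\ast z)$, simplified by $(V_{4})$, $(V_{1})$, $(V_{3})$, reaches the same inequality from the BE-axioms alone. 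In the converse direction the paper again uses transitivity, feeding $(y\ast z)\ast z\leq (x\ast (y\ast z))\ast (x\ast z)$ into the monotonicity statement of Proposition \ref{PRO2} and then chaining with $(2)$; your direct instantiation of $(2)$ at $(x\ast (y\ast z),\,y,\,z)$ combined with the auxiliary bound at $u:=y\ast z$ yields $(2.2)$ at once, and the max-bookkeeping you worried about is trivial since $\max \{\max \{f(x),\frac{-k-1}{2}\},f(y),\frac{-k-1}{2}\}=\max \{f(x),f(y),\frac{-k-1}{2}\}$. The payoff of your route is a strictly stronger result: the characterization holds in \emph{every} BE-algebra, so the transitivity hypothesis in the statement is superfluous --- exactly the possibility you flagged at the end of your plan. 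What the paper's route offers in exchange is mainly expository: it mirrors the classical crisp-ideal argument (compare Lemma \ref{LEM1}) and displays how transitivity interacts with the monotonicity property of Proposition \ref{PRO2}, but logically it proves less from more hypotheses.
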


\begin{proof}
Suppose that $(X,f)$ is an $([e],[e]\vee \lbrack c])$-ideal of $X$. From
theorem \ref{TH5}$\left( 1\right) $,it is easily seen that%
\begin{equation*}
f(1)\leq \max \{f(x),\frac{-k-1}{2}\}\text{.}
\end{equation*}%
Since $X$ is transitive,%
\begin{equation*}
((y\ast z)\ast z)\ast ((x\ast (y\ast z))\ast (x\ast z))=1\text{ \ \ \ }%
\mathbf{(G)}
\end{equation*}%
for all $x,y,z\in X$. By using $(V_{3})$ and $\mathbf{(G)}$%
\begin{equation*}
f(x\ast z)=f(1\ast (x\ast z))=f(((y\ast z)\ast z)\ast ((x\ast (y\ast z))\ast
(x\ast z))\ast (x\ast z))
\end{equation*}%
\newline
By using theorem \ref{TH4}$(2.2)$, \ref{TH5}$\left( 2\right) $, we have%
\begin{eqnarray*}
f(((y\ast z)\ast z)\ast ((x\ast (y\ast z))\ast (x\ast z))\ast (x\ast z))
&\leq &\max \{f((y\ast z)\ast z),f(x\ast (y\ast z)),\frac{-k-1}{2}\} \\
&=&\max \{f(x\ast (y\ast z)),f((y\ast z)\ast z),\frac{-k-1}{2}\} \\
&\leq &\max \{f(x\ast (y\ast z)),f(y),\frac{-k-1}{2}\}
\end{eqnarray*}%
Hence $f(x\ast z)\leq \max \{f(x\ast (y\ast z)),f(y),\frac{-k-1}{2}\}$ for
all $x,y,z\in X$.\newline
Conversly suppose that $(X,f)$ satisfies $(1)$ and $(2)$. By using $(2)$, $%
(V_{1})$, $(V_{2})$ and $(1)$%
\begin{eqnarray*}
f(x\ast y) &\leq &\max \{f(x\ast (y\ast y)),f(y),\frac{-k-1}{2}\} \\
&=&\max \{f(x\ast 1),f(y),\frac{-k-1}{2}\} \\
&=&\max \{f(1),f(y),\frac{-k-1}{2}\} \\
&=&\max \{f(y),\frac{-k-1}{2}\}
\end{eqnarray*}%
Also by using $(2)$ and $(1)$ we get%
\begin{eqnarray*}
f((x\ast y)\ast y) &\leq &\max \{f((x\ast y)\ast (x\ast y)),f(x),\frac{-k-1}{%
2}\} \\
&=&\max \{f(1),f(x),\frac{-k-1}{2}\} \\
&=&\max \{f(x),\frac{-k-1}{2}\}
\end{eqnarray*}%
for all $x,y\in X$. Now, since $(y\ast z)\ast z\leq (x\ast (y\ast z))\ast
(x\ast z)$ for all $x,y,z\in X$, it follows that from proposition \ref{PRO2}%
, we have\newline
\begin{equation*}
f((x\ast (y\ast z))\ast (x\ast z))\leq \max \{f((y\ast z)\ast z),\frac{-k-1}{%
2}\}
\end{equation*}%
\newline
So, from $(2)$, we have%
\begin{eqnarray*}
f((x\ast (y\ast z))\ast z) &\leq &\max \{f((x\ast (y\ast z))\ast (x\ast
z)),f(x),\frac{-k-1}{2}\} \\
&\leq &\max \{f((y\ast z)\ast z),f(x),\frac{-k-1}{2}\} \\
&\leq &\max \{f(x),f(y),\frac{-k-1}{2}\}
\end{eqnarray*}%
for all $x,y,z\in X$. Using theorem \ref{TH4}, we conclude that $(X,f)$ is a 
$([e],[e]\vee \lbrack c])$-ideal of $X$.
\end{proof}

If $(k=0)$, then the followig holds.

\begin{corollary}
Let $(X,f)$ be an $N$-structure of transitive BE-algebra $X$. Then $(X,f)$
is an $([e],[e]\vee \lbrack c])$-ideal of $X$ if and only if it satisfies
the following inequalities:\newline
$(1)$ $(\forall x\in X)$ $(\ f(1)\leq \max \{f(x),-0.5\}\ )$,\newline
$(2)$ $(\forall x,y,z\in X)$ $(\ f(x\ast z)\leq \max \{f(x\ast (y\ast
z)),f(y),-0.5\}\ )$.
\end{corollary}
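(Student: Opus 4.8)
The plan is to reduce everything to the internal characterization already established in Theorem \ref{TH4}, namely that $(X,f)$ is an $([e],[e]\vee[c_{k}])$-ideal precisely when it satisfies the two inequalities $(2.1)$ and $(2.2)$. I will therefore prove the biconditional by translating each direction into a statement about these inequalities, using transitivity of $X$ as the bridge that connects $(y\ast z)\ast z$ with $(x\ast(y\ast z))\ast(x\ast z)$.

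For the forward direction, assume $(X,f)$ is an $([e],[e]\vee[c_{k}])$-ideal. Part $(1)$ is immediate from Theorem \ref{TH5}$(1)$. For part $(2)$, the key move is to instantiate transitivity in the form $(y\ast z)\ast z\leq (x\ast(y\ast z))\ast(x\ast z)$, equivalently $((y\ast z)\ast z)\ast((x\ast(y\ast z))\ast(x\ast z))=1$. Using $(V_{3})$ I rewrite $f(x\ast z)=f(1\ast(x\ast z))$ and replace the factor $1$ by this transitivity identity, producing an expression of the shape $f((p\ast(q\ast r))\ast r)$ with $p=(y\ast z)\ast z$, $q=x\ast(y\ast z)$ and $r=x\ast z$. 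Theorem \ref{TH4}$(2.2)$ then bounds this by $\max\{f((y\ast z)\ast z),f(x\ast(y\ast z)),\frac{-k-1}{2}\}$, and finally Theorem \ref{TH5}$(2)$ replaces $f((y\ast z)\ast z)$ by $\max\{f(y),\frac{-k-1}{2}\}$, which collapses the bound to the desired $\max\{f(x\ast(y\ast z)),f(y),\frac{-k-1}{2}\}$.

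For the converse, assume $(1)$ and $(2)$; I aim to recover $(2.1)$ and $(2.2)$ and then invoke Theorem \ref{TH4}. Inequality $(2.1)$ follows by applying $(2)$ with the inner variable set to $y$, so that $y\ast y=1$ and $x\ast 1=1$ collapse the middle term to $f(1)$, which $(1)$ absorbs into $\max\{f(y),\frac{-k-1}{2}\}$. A parallel substitution (choosing the free variable to be $x$, so that a factor $(x\ast y)\ast(x\ast y)=1$ appears) yields the auxiliary bound $f((x\ast y)\ast y)\leq\max\{f(x),\frac{-k-1}{2}\}$. For $(2.2)$ I again use transitivity $(y\ast z)\ast z\leq(x\ast(y\ast z))\ast(x\ast z)$, but now feed it through Proposition \ref{PRO2} (whose hypotheses are exactly $(1)$ and $(2)$) to obtain $f((x\ast(y\ast z))\ast(x\ast z))\leq\max\{f((y\ast z)\ast z),\frac{-k-1}{2}\}$. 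A further application of $(2)$ gives $f((x\ast(y\ast z))\ast z)\leq\max\{f((x\ast(y\ast z))\ast(x\ast z)),f(x),\frac{-k-1}{2}\}$, and chaining these with the auxiliary bound $f((y\ast z)\ast z)\leq\max\{f(y),\frac{-k-1}{2}\}$ collapses the right-hand side to $\max\{f(x),f(y),\frac{-k-1}{2}\}$, which is $(2.2)$.

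The main obstacle is not any single estimate but the bookkeeping of the deeply nested starred expressions: one must instantiate transitivity in exactly the form $(y\ast z)\ast z\leq(x\ast(y\ast z))\ast(x\ast z)$ and then match the resulting term against the template $(p\ast(q\ast r))\ast r$ of Theorem \ref{TH4}$(2.2)$ with the correct choice of $p,q,r$. Getting these substitutions right, and making sure that each auxiliary term (especially $f((y\ast z)\ast z)$) is itself controlled by the lemmas already available, is where the care lies; once the correct instantiations are fixed, every remaining step is a monotone substitution into an expression of the form $\max\{\cdot,\cdot,\frac{-k-1}{2}\}$.
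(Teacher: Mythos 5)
Your proposal is correct and follows essentially the same route as the paper: this corollary is just the $k=0$ instance of Theorem \ref{TH6}, and your argument reproduces the paper's proof of that theorem step for step (Theorem \ref{TH5}(1) for the first inequality, the transitivity identity fed through Theorem \ref{TH4}(2.2) and Theorem \ref{TH5}(2) for the second, and in the converse the recovery of (2.1), the auxiliary bound on $f((x\ast y)\ast y)$, and Proposition \ref{PRO2} chained with condition (2) to recover (2.2)). The only cosmetic difference is that you re-derive the general-$k$ statement rather than simply citing Theorem \ref{TH6} and setting $k=0$, so that $\frac{-k-1}{2}$ becomes $-0.5$.
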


\begin{theorem}
Let $X$ be a transitive BE-algebra. If $(X,f)$ is a $([e],[e]\vee \lbrack
c_{k}])$-ideal of $X$ such that $f(1)>\frac{-k-1}{2}$, then $(X,f)$ is an $N$%
-ideal of $X$. \ \ where $k\in (-1,0]$.
\end{theorem}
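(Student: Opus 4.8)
The plan is to reduce the statement to the cut-characterization of $N$-ideals supplied by Lemma \ref{LEM1}, after first converting the $([e],[e]\vee[c_k])$-ideal hypothesis into usable inequalities via the transitivity of $X$. First I would invoke Theorem \ref{TH6}: since $X$ is transitive and $(X,f)$ is an $([e],[e]\vee[c_k])$-ideal, $f$ satisfies $(1)$ $f(1)\leq\max\{f(x),\frac{-k-1}{2}\}$ for all $x\in X$, and $(2)$ $f(x\ast z)\leq\max\{f(x\ast(y\ast z)),f(y),\frac{-k-1}{2}\}$ for all $x,y,z\in X$.

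The decisive observation, which I expect to carry the real content of the argument, is that the extra hypothesis $f(1)>\frac{-k-1}{2}$ makes the constant $\frac{-k-1}{2}$ inert. Indeed, if some $x_{0}\in X$ had $f(x_{0})<\frac{-k-1}{2}$, then $\max\{f(x_{0}),\frac{-k-1}{2}\}=\frac{-k-1}{2}$, and $(1)$ would force $f(1)\leq\frac{-k-1}{2}$, contradicting the hypothesis. Hence $f(x)\geq\frac{-k-1}{2}$ for \emph{every} $x\in X$. This collapses $(1)$ to the monotonicity statement $f(1)\leq f(x)$, and, since every argument of the maximum in $(2)$ now dominates $\frac{-k-1}{2}$, it collapses $(2)$ to $f(x\ast z)\leq\max\{f(x\ast(y\ast z)),f(y)\}$ for all $x,y,z\in X$.

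With these clean inequalities in hand, I would show that every nonempty closed cut $C(f;t)$, for $t\in[-1,0]$, is an ideal of $X$ by verifying the two conditions of Lemma \ref{LEM1}. Fix $t$ with $C(f;t)\neq\emptyset$ and choose $a\in C(f;t)$, so $f(a)\leq t$; then $f(1)\leq f(a)\leq t$ gives $1\in C(f;t)$, which is condition $(1)$. For condition $(2)$, take $x,z\in X$ and $y\in C(f;t)$ with $x\ast(y\ast z)\in C(f;t)$, so $f(y)\leq t$ and $f(x\ast(y\ast z))\leq t$; the collapsed inequality $(2)$ then yields $f(x\ast z)\leq\max\{f(x\ast(y\ast z)),f(y)\}\leq t$, i.e. $x\ast z\in C(f;t)$. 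Thus $C(f;t)\in J(X)$ whenever it is nonempty.

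Since $t\in[-1,0]$ was arbitrary, every closed $(f,t)$-cut lies in $J(X)\cup\{\emptyset\}$, which is precisely the defining condition of an $N$-ideal; therefore $(X,f)$ is an $N$-ideal of $X$. The only subtle point is the inertness step of the second paragraph: once one sees that $f(1)>\frac{-k-1}{2}$ pins $f$ above the threshold $\frac{-k-1}{2}$ everywhere, the remainder is a routine cut computation, so I would spend the bulk of the write-up making that reduction airtight.
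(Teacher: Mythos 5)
Your proof is correct and takes essentially the same route as the paper: invoke Theorem \ref{TH6}, use the hypothesis $f(1)>\frac{-k-1}{2}$ to show $f$ never drops below the threshold $\frac{-k-1}{2}$, so the maxima collapse to $f(1)\leq f(x)$ and $f(x\ast z)\leq \max\{f(x\ast (y\ast z)),f(y)\}$. You actually go one step further than the paper, which stops at these collapsed inequalities and simply asserts "hence $(X,f)$ is an $N$-ideal"; your explicit verification that every nonempty cut $C(f;t)$ satisfies the conditions of Lemma \ref{LEM1} supplies the final step the paper leaves implicit.
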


\begin{proof}
Suppose that $(X,f)$ is a $([e],[e]\vee \lbrack c_{k}])$-ideal of $X$ such
that $\frac{-k-1}{2}<f(1)$. Then $\frac{-k-1}{2}<f(x)$ and so $\frac{-k-1}{2}%
<f(1)\leq f(x)$ for all $x\in X$ by theorem \ref{TH6}$(1)$%
\begin{equation*}
f(1)\leq \max \{f(x),\frac{-k-1}{2}\}
\end{equation*}%
for all $x\in X$. It follows that from theorem \ref{TH6}$(2)$,%
\begin{eqnarray*}
f(x\ast z) &\leq &\max \{f(x\ast (y\ast z)),f(y),\frac{-k-1}{2}\} \\
&=&\max \{f(x\ast (y\ast z)),f(y)\}
\end{eqnarray*}%
for all $x,y,z\in X$. Hence $(X,f)$ is an $N$-ideal of $X$.
\end{proof}

If $(k=0)$, then the followig holds.

\begin{corollary}
Let $X$ be a transitive BE-algebra. If $(X,f)$ is a $([e],[e]\vee \lbrack c])
$-ideal of $X$ such that $f(1)>-0.5$, then $(X,f)$ is an $N$-ideal of $X$.
\end{corollary}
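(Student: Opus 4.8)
The plan is to route the argument through Theorem~\ref{TH6}, whose inequality characterization is available precisely because $X$ is transitive, and then to use the extra hypothesis $f(1)>\frac{-k-1}{2}$ to make the constant $\frac{-k-1}{2}$ vanish from the relevant maxima. The target I am aiming at is the cut-level description of an $N$-ideal extracted from Lemma~\ref{LEM1}: an $N$-structure $(X,f)$ is an $N$-ideal exactly when $(\mathrm{i})$ $f(1)\leq f(x)$ for all $x$ and $(\mathrm{ii})$ $f(x\ast z)\leq \max\{f(x\ast(y\ast z)),f(y)\}$ for all $x,y,z$. Indeed $(\mathrm{i})$ forces $1\in C(f;t)$ for every nonempty cut $C(f;t)$, and $(\mathrm{ii})$ is exactly the cut-wise form of the implication in Lemma~\ref{LEM1}$(2)$, so these two inequalities are equivalent to every nonempty $C(f;t)$ being an ideal.

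First I would invoke Theorem~\ref{TH6}$(1)$ to obtain $f(1)\leq \max\{f(x),\frac{-k-1}{2}\}$ for all $x$. Since by hypothesis $f(1)>\frac{-k-1}{2}$, the maximum on the right cannot be realized by $\frac{-k-1}{2}$, and therefore $f(1)\leq f(x)$ for every $x$. This is already condition $(\mathrm{i})$, and it yields the sharper fact $f(x)\geq f(1)>\frac{-k-1}{2}$, so that \emph{every} value of $f$ lies strictly above the threshold.

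Next I would substitute this observation into Theorem~\ref{TH6}$(2)$, namely $f(x\ast z)\leq \max\{f(x\ast(y\ast z)),f(y),\frac{-k-1}{2}\}$. Because both $f(x\ast(y\ast z))$ and $f(y)$ strictly exceed $\frac{-k-1}{2}$, the constant entry never attains the maximum and may be dropped, giving $f(x\ast z)\leq \max\{f(x\ast(y\ast z)),f(y)\}$, which is condition $(\mathrm{ii})$. With $(\mathrm{i})$ and $(\mathrm{ii})$ in hand, Lemma~\ref{LEM1} shows that each nonempty closed cut $C(f;t)$ is an ideal, whence $(X,f)$ is an $N$-ideal.

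The single delicate point, which I would treat as the crux, is the threshold-elimination: one must see that the hypothesis $f(1)>\frac{-k-1}{2}$ is not merely a statement about the single value $f(1)$ but, propagated through Theorem~\ref{TH6}$(1)$, lifts the entire range of $f$ above $\frac{-k-1}{2}$ and so renders that constant inert inside every maximum. Once this propagation is noticed, the passage from the resulting threshold-free inequalities to the defining cut condition is a routine application of Lemma~\ref{LEM1}, and transitivity of $X$ enters only through the appeal to Theorem~\ref{TH6}.
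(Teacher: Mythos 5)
Your proof is correct and follows essentially the same route as the paper's: apply Theorem~\ref{TH6}, use the hypothesis $f(1)>-0.5$ to force $f(x)>-0.5$ for every $x$, so the constant drops out of both maxima, yielding $f(1)\leq f(x)$ and $f(x\ast z)\leq \max\{f(x\ast (y\ast z)),f(y)\}$. The only difference is that you spell out the final passage from these inequalities to the cut-level definition of an $N$-ideal via Lemma~\ref{LEM1}, a step the paper's proof leaves implicit.
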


\begin{theorem}
If $(X,f)$ is a $([e],[e]\vee \lbrack c_{k}])$-ideal of a transitive
BE-algebra $X$. Show that%
\begin{equation*}
(\forall t\in \lbrack -1,\frac{-k-1}{2}))\text{ }(Q(f;t)\in J(X)\cup
\{\emptyset \})
\end{equation*}%
\ \ where $Q(f;t):=\{x\in X$ $|$ $\frac{x}{t}[c_{k}]f\}$, $J(X)$ is a set of
all ideal of $X$ and $k\in (-0.5,0]$.
\end{theorem}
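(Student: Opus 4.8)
The plan is to apply the ideal-characterization of Lemma~\ref{LEM1} to the set $Q(f;t)$, feeding it the two inequalities supplied by Theorem~\ref{TH6} (available since $X$ is transitive). First I would unwind the definition: by the meaning of $[c_{k}]$, an element $x$ lies in $Q(f;t)$ precisely when $f(x)+t+k+1<0$, i.e. $f(x)<-t-k-1$. Writing $s:=-t-k-1$, the set becomes the sub-level set $Q(f;t)=\{x\in X\mid f(x)<s\}$. The single fact that makes everything run is the elementary inequality $s>\frac{-k-1}{2}$: from $t<\frac{-k-1}{2}$ we get $-t>\frac{k+1}{2}$, hence $s=-t-k-1>\frac{k+1}{2}-k-1=\frac{-k-1}{2}$. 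Thus the constant threshold $\frac{-k-1}{2}$ occurring throughout Theorem~\ref{TH6} always lies strictly below the cut level $s$.

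If $Q(f;t)=\emptyset$ there is nothing to prove, so assume it is nonempty and verify the two clauses of Lemma~\ref{LEM1}. For clause $(1)$, pick any $a\in Q(f;t)$, so that $f(a)<s$. Theorem~\ref{TH6}$(1)$ gives $f(1)\le \max\{f(a),\frac{-k-1}{2}\}$, and since both arguments of the maximum are $<s$, we obtain $f(1)<s$, i.e. $\frac{1}{t}[c_{k}]f$, so $1\in Q(f;t)$.

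For clause $(2)$, take $x,z\in X$ and $y\in Q(f;t)$ with $x\ast (y\ast z)\in Q(f;t)$; then $f(y)<s$ and $f(x\ast (y\ast z))<s$. Theorem~\ref{TH6}$(2)$ yields $f(x\ast z)\le \max\{f(x\ast (y\ast z)),f(y),\frac{-k-1}{2}\}$, and as each of the three entries is strictly below $s$, we conclude $f(x\ast z)<s$, i.e. $x\ast z\in Q(f;t)$. By Lemma~\ref{LEM1}, $Q(f;t)$ is an ideal of $X$, which completes the argument.

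There is no genuine obstacle in this proof; the only thing to get right is the direction of the inequality $s>\frac{-k-1}{2}$, which is exactly where the hypothesis $t\in[-1,\frac{-k-1}{2})$ is used and which guarantees that inserting the constant $\frac{-k-1}{2}$ into the maxima of Theorem~\ref{TH6} never raises them to or above the cut level $s$. Everything else is a transparent consequence of reading $Q(f;t)$ as a strict sub-level set and transporting the two inequalities of Theorem~\ref{TH6} through the maxima; the transitivity of $X$ enters only insofar as it makes Theorem~\ref{TH6} applicable.
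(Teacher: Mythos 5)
Your proof is correct and takes essentially the same route as the paper's: assume $Q(f;t)\neq\emptyset$, use Theorem \ref{TH6}$(1)$ to get $1\in Q(f;t)$ and Theorem \ref{TH6}$(2)$ to get closure under the implication $x\ast(y\ast z)\in Q(f;t)\Rightarrow x\ast z\in Q(f;t)$, then invoke Lemma \ref{LEM1}. Your substitution $s:=-t-k-1$ together with the observation $s>\frac{-k-1}{2}$ simply packages, in one step, the two-case analysis (max attained at $\frac{-k-1}{2}$ versus at the function values) that the paper carries out explicitly.
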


\begin{proof}
\begin{corollary}
Suppose that $Q(f;t)\neq \emptyset $ for all $t\in \lbrack -1,\frac{-k-1}{2})
$. Then there exists $x\in Q(f;t)$, and so $\frac{x}{t}[c]f$, i.e., $%
f(x)+t+k+1<0$. Using theorem \ref{TH6}$(1)$, we have%
\begin{eqnarray*}
f(1) &\leq &\max \{f(x),\frac{-k-1}{2}\} \\
&=&\left\{ 
\begin{array}{ll}
\frac{-k-1}{2} & \text{if \ }f(x)\leq \frac{-k-1}{2} \\ 
f(x) & \text{if \ }f(x)>\frac{-k-1}{2}%
\end{array}%
\right.  \\
&<&-1-t-k
\end{eqnarray*}%
which indicates that $1\in Q(f;t)$. Let $x\ast (y\ast z)\in Q(f;t)$ for all $%
x,y,z\in X$ here $y\in Q(f;t)$. Then $\frac{x\ast (y\ast z)}{t}[c_{k}]f$ and 
$\frac{y}{t}[c]f$, i.e., $f(x\ast (y\ast z))+t+k+1<0$ and $f(y)+t+k+1<0$.
Using theorem \ref{TH6}$(2)$, we get%
\begin{equation*}
f(x\ast z)\leq \max \{f(x\ast (y\ast z)),f(y),\frac{-k-1}{2}\}
\end{equation*}%
Thus, if $\max \{f(x\ast (y\ast z)),f(y)\}>\frac{-k-1}{2}$, then%
\begin{equation*}
f(x\ast z)\leq \max \{f(x\ast (y\ast z)),f(y)\}<-1-t-k
\end{equation*}%
If\ $\max \{f(x\ast (y\ast z)),f(y)\}\leq \frac{-k-1}{2}$, then $f(x\ast
z)\leq \frac{-k-1}{2}<-1-t-k$. This show that $\frac{x\ast z}{t}[c_{k}]f$
i.e., $x\ast z\in Q(f;t)$. By using lemma \ref{LEM1}, we have $Q(f;t)$ is an
ideal of $X$.
\end{corollary}
\end{proof}

If $(k=0)$, then the followig holds.

\begin{corollary}
If $(X,f)$ is a $([e],[e]\vee \lbrack c])$-ideal of a transitive BE-algebra $%
X$. Show that%
\begin{equation*}
(\forall t\in \lbrack -1,-0.5))\text{ }(Q(f;t)\in J(X)\cup \{\emptyset \})
\end{equation*}%
\ \ where $Q(f;t):=\{x\in X$ $|$ $\frac{x}{t}[c]f\}$, and $J(X)$ is a set of
all ideal of $X$
\end{corollary}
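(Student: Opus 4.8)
The plan is to reduce everything to the characterization of ideals in Lemma \ref{LEM1}, after first translating the defining condition of $Q(f;t)$ into a level inequality on $f$. By the definition of $[c_{k}]$, membership $\frac{x}{t}[c_{k}]f$ means $f(x)+t+k+1<0$, so
\[
Q(f;t)=\{x\in X \mid f(x)<-1-t-k\}.
\]
I would then record the one numerical fact that does all the work: since $t<\frac{-k-1}{2}$, we have $-1-t-k>\frac{-k-1}{2}$. This says that the cut level $-1-t-k$ defining $Q(f;t)$ strictly dominates the threshold $\frac{-k-1}{2}$ appearing throughout Theorem \ref{TH6}, so every term $\frac{-k-1}{2}$ sitting inside a maximum will automatically fall below the $Q$-cutoff.

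Assuming $Q(f;t)\neq \emptyset$ (otherwise there is nothing to prove), I would verify the two clauses of Lemma \ref{LEM1}. For clause $(1)$, pick any $x\in Q(f;t)$ and apply Theorem \ref{TH6}$(1)$ to get $f(1)\leq \max\{f(x),\frac{-k-1}{2}\}$; since $f(x)<-1-t-k$ by the choice of $x$ and $\frac{-k-1}{2}<-1-t-k$ by the numerical fact, the maximum is strictly below $-1-t-k$, whence $1\in Q(f;t)$.

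For clause $(2)$, suppose $y\in Q(f;t)$ and $x\ast (y\ast z)\in Q(f;t)$, so that both $f(y)$ and $f(x\ast (y\ast z))$ are strictly less than $-1-t-k$. Applying Theorem \ref{TH6}$(2)$ gives $f(x\ast z)\leq \max\{f(x\ast (y\ast z)),f(y),\frac{-k-1}{2}\}$, and I would split on whether $\max\{f(x\ast (y\ast z)),f(y)\}$ exceeds $\frac{-k-1}{2}$: in the first case the bound becomes $\max\{f(x\ast (y\ast z)),f(y)\}<-1-t-k$, and in the second case it becomes $\frac{-k-1}{2}<-1-t-k$. Either way $f(x\ast z)<-1-t-k$, i.e.\ $x\ast z\in Q(f;t)$. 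Lemma \ref{LEM1} then yields that $Q(f;t)$ is an ideal of $X$.

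The only genuinely delicate point is the threshold inequality $-1-t-k>\frac{-k-1}{2}$ together with making sure the strict inequalities survive the passage through the two $\max$ expressions; once that is in place, both clauses are routine case checks. I would also confirm that transitivity of $X$ is invoked exactly where Theorem \ref{TH6} needs it, since that hypothesis is what licenses the second inequality of Theorem \ref{TH6}, on which clause $(2)$ rests.
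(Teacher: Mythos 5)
Your proposal is correct and follows essentially the same route as the paper: the paper's proof of the general $k$ version (of which this corollary is the $k=0$ case) also translates $\frac{x}{t}[c_{k}]f$ into the inequality $f(x)+t+k+1<0$, applies Theorem \ref{TH6}$(1)$ and \ref{TH6}$(2)$ with the same case split on whether $\max\{f(x\ast (y\ast z)),f(y)\}$ exceeds $\frac{-k-1}{2}$, uses the same threshold fact $\frac{-k-1}{2}<-1-t-k$, and concludes via Lemma \ref{LEM1}. Your observation that the case split can be avoided (since all three terms in the maximum are strictly below $-1-t-k$) is a minor streamlining, but the argument is the paper's.
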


\begin{theorem}
Let $X$ be a transitive BE-algebra. Then the followings are equivalent:%
\newline
$(1)$ An $N$-structure $(X,f)$ is a $([e],[e]\vee \lbrack c_{k}])$-ideal of $%
X$\newline
$(2)$ $(\forall t\in \lbrack -1,0))$ $([f]_{t}\in J(X)\cup \{\emptyset \})$%
\newline
where $[f]_{t}:=C(f;t)\cup \{x\in X$ $|$ $f(x)+t+k+1\leq 0\}$, $J(X)$ is a
set of all ideal of $X$, and $k\in (-1,0]$.
\end{theorem}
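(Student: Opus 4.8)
The plan is to route everything through the level-set characterization already established in Theorem \ref{TH6} and the ideal criterion of Lemma \ref{LEM1}, rather than arguing directly from the point $N$-structure definitions. The first thing I would record is a structural simplification of $[f]_t$. Writing $s(t):=\max\{t,-t-k-1\}$, the two defining conditions $f(x)\le t$ and $f(x)\le -t-k-1$ combine to give
\begin{equation*}
[f]_t=\{x\in X\mid f(x)\le s(t)\}=C(f;s(t)).
\end{equation*}
Thus $[f]_t$ is again a closed cut. Moreover $s(t)\ge \frac{-k-1}{2}$ for every $t$, since the two linear branches of $s(t)$ (one increasing, one decreasing) cross exactly at $t=\frac{-k-1}{2}$, where $s$ attains its minimum; and for $t\in[\frac{-k-1}{2},0)$ the first branch dominates, so $s(t)=t$. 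This bookkeeping about which branch governs $[f]_t$, while keeping the chosen threshold inside $[-1,0)$, is the one delicate point of the argument.

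For $(1)\Rightarrow(2)$, assume $(X,f)$ is an $([e],[e]\vee[c_k])$-ideal and fix $t$ with $[f]_t\ne\emptyset$; set $s=s(t)$, so $[f]_t=C(f;s)$ and $s\ge\frac{-k-1}{2}$. To verify via Lemma \ref{LEM1} that $[f]_t$ is an ideal, I would first check $1\in[f]_t$: picking any $x\in[f]_t$ and applying Theorem \ref{TH6}$(1)$ gives $f(1)\le\max\{f(x),\frac{-k-1}{2}\}\le\max\{s,\frac{-k-1}{2}\}=s$. For the second axiom, suppose $y\in[f]_t$ and $x\ast(y\ast z)\in[f]_t$; then Theorem \ref{TH6}$(2)$ yields
\begin{equation*}
f(x\ast z)\le\max\{f(x\ast(y\ast z)),f(y),\tfrac{-k-1}{2}\}\le\max\{s,s,\tfrac{-k-1}{2}\}=s,
\end{equation*}
so $x\ast z\in[f]_t$. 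Hence $[f]_t\in J(X)$.

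For $(2)\Rightarrow(1)$, I would argue by contraposition and reconstruct the two inequalities of Theorem \ref{TH6}. If condition $(1)$ of that theorem failed at some $x_0$, put $a=\max\{f(x_0),\frac{-k-1}{2}\}$; since $f\le 0$ forces $a\in[\frac{-k-1}{2},0)$, the choice $t=a$ gives $s(t)=a$ and hence $x_0\in[f]_a\ne\emptyset$, so $[f]_a$ is an ideal and $1\in[f]_a$, i.e. $f(1)\le a$ --- contradicting $f(1)>a$. Similarly, if condition $(2)$ failed at some $x_0,y_0,z_0$, set $b=\max\{f(x_0\ast(y_0\ast z_0)),f(y_0),\frac{-k-1}{2}\}\in[\frac{-k-1}{2},0)$ and take $t=b$; then both $y_0$ and $x_0\ast(y_0\ast z_0)$ lie in $[f]_b=C(f;b)$, so Lemma \ref{LEM1}$(2)$ forces $x_0\ast z_0\in[f]_b$, i.e. $f(x_0\ast z_0)\le b$, again a contradiction. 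With both inequalities restored, Theorem \ref{TH6} gives that $(X,f)$ is an $([e],[e]\vee[c_k])$-ideal. The only genuine obstacle is the threshold bookkeeping flagged above; once $[f]_t$ is identified with the cut $C(f;s(t))$ satisfying $s(t)\ge\frac{-k-1}{2}$, both directions collapse onto the cited results.
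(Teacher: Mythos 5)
Your proposal is correct, and its skeleton is the same as the paper's: both directions are routed through Theorem \ref{TH6} and Lemma \ref{LEM1}, and the converse is handled by choosing a threshold in $[\frac{-k-1}{2},0)$ and deriving a contradiction. Where you genuinely depart from the paper is in the treatment of $[f]_t$ itself. The paper works directly with the disjunction defining $[f]_t$ (``$f(x)\le t$ or $f(x)+t+k+1\le 0$''), which forces a two-case argument to show $1\in[f]_t$ and a four-case analysis $(a_1)$--$(a_4)$ for the second ideal axiom, each case ending with its own branch computation of a maximum. Your observation that the disjunction collapses to the single inequality $f(x)\le s(t)$ with $s(t)=\max\{t,-t-k-1\}$, i.e.\ that $[f]_t=C(f;s(t))$ is itself a closed cut with $s(t)\ge\frac{-k-1}{2}$, eliminates all of that bookkeeping: both ideal axioms follow from one application each of Theorem \ref{TH6}$(1)$ and \ref{TH6}$(2)$, and in the converse direction the paper's separate verification that $1\notin C(f;t_y)$ \emph{and} $f(1)+t_y+k+1>0$ (and likewise for $x\ast z$) becomes the single statement $f(1)\le s(a)=a$ contradicting the assumed failure. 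The paper's route buys explicitness --- each membership condition is checked against the literal definition of $[f]_t$, which is what a reader unfamiliar with the cut identity would expect --- while yours buys brevity and robustness: the four cases cannot be mishandled because they never appear (the paper's own case $(a_3)$ is dismissed with ``the prove is same to case $(a_2)$,'' and its converse direction contains typos such as ``$x\in C(f;t)$'' where $y\in C(f;t)$ is meant, exactly the kind of slip your unification precludes). One minor point worth stating explicitly in your write-up: when $t$ is close to $-1$ the quantity $s(t)=-t-k-1$ may be $\ge 0$, so $C(f;s(t))$ is then the full set $X$ rather than a closed cut in the paper's strict sense ($t\in[-1,0]$); this is harmless, since $X$ is an ideal and your inequality chains remain valid, but it should be acknowledged rather than left implicit in the phrase about ``keeping the threshold inside $[-1,0)$,'' which is only actually needed (and verified) for the thresholds $a$ and $b$ you construct in the converse direction.
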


\begin{proof}
$(1)\Rightarrow (2)$: Suppose that $(1)$ satisfies. Let $[f]_{t}\neq
\emptyset $, here $t\in \lbrack -1,0)$. Then there exists $x\in \lbrack
f]_{t}$, and so $f(x)\leq t$ \ or $f(x)+t+k+1\leq 0$ for all $x\in X$ and $%
t\in \lbrack -1,0)$. If $f(x)\leq t$, then\newline
\begin{eqnarray*}
f(1) &\leq &\max \{f(x),\frac{-k-1}{2}\}\leq \max \{t,\frac{-k-1}{2}\} \\
&=&\left\{ 
\begin{array}{ll}
t & \text{if \ }t>\frac{-k-1}{2} \\ 
\frac{-k-1}{2}\leq -1-t-k & \text{if \ }t\leq \frac{-k-1}{2}%
\end{array}%
\right. 
\end{eqnarray*}%
\newline
By theorem \ref{TH6}$(1)$. Hence $1\in \lbrack f]_{t}$. If $f(x)+t+k+1\leq 0$%
, then\newline
\begin{eqnarray*}
f(1) &\leq &\max \{f(x),\frac{-k-1}{2}\}\leq \max \{-1-t-k,\frac{-k-1}{2}\}
\\
&=&\left\{ 
\begin{array}{ll}
-1-t-k & \text{if \ }t<\frac{-k-1}{2} \\ 
\frac{-k-1}{2}\leq t & \text{if \ }t\geq \frac{-k-1}{2}%
\end{array}%
\right. 
\end{eqnarray*}%
\newline
And so $1\in \lbrack f]_{t}$. Let $x,y,z\in X$ be such that $y\in \lbrack
f]_{t}$ and $x\ast (y\ast z)\in \lbrack f]_{t}$. Then $f(y)\leq t$ or $%
f(y)+t+k+1\leq 0$, and \ $f(x\ast (y\ast z))\leq t$ or $f(x\ast (y\ast
z))+t+k+1\leq 0$. Thus we let the four cases:\newline
$(a_{1})$ $f(y)\leq t$ and $f(x\ast (y\ast z))\leq t$,\newline
$(a_{2})$ $f(y)\leq t$ and $f(x\ast (y\ast z))+t+k+1\leq 0$,\newline
$(a_{3})$ $f(y)+t+k+1\leq 0$ and $f(x\ast (y\ast z))\leq t$,\newline
$(a_{4})$ $f(y)+t+k+1\leq 0$ and $f(x\ast (y\ast z))+t+k+1\leq 0$.\newline
For case $(a_{1})$, theorem \ref{TH6}$(2)$, implies that%
\begin{eqnarray*}
f(x\ast z) &\leq &\max \{f(x\ast (y\ast z)),f(y),\frac{-k-1}{2}\}\leq \max
\{t,\frac{-k-1}{2}\} \\
&=&\left\{ 
\begin{array}{ll}
\frac{-k-1}{2} & \text{if \ }t<\frac{-k-1}{2} \\ 
\text{ }t & \text{if \ }t\geq \frac{-k-1}{2}%
\end{array}%
\right. 
\end{eqnarray*}%
so that $x\ast z\in C(f;t)$ or $f(x\ast z)+t+k\leq \frac{-k-1}{2}+\frac{-k-1%
}{2}+k=-1$. Thus $x\ast z\in \lbrack f]_{t}$. For case $(a_{2})$, we have%
\begin{eqnarray*}
f(x\ast z) &\leq &\max \{f(x\ast (y\ast z)),f(y),\frac{-k-1}{2}\}\leq \max
\{-1-t-k,t,\frac{-k-1}{2}\} \\
&=&\left\{ 
\begin{array}{ll}
-1-t-k & \text{if \ }t<\frac{-k-1}{2} \\ 
t & \text{if \ }t\geq \frac{-k-1}{2}%
\end{array}%
\right. 
\end{eqnarray*}%
Thus $x\ast z\in \lbrack f]_{t}$.\newline
For case $(a_{3})$, the prove is same to case $(a_{2})$. For case $(a_{4})$
we have,%
\begin{eqnarray*}
f(x\ast z) &\leq &\max \{f(x\ast (y\ast z)),f(y),\frac{-k-1}{2}\}\leq \max
\{-1-t-k,\frac{-k-1}{2}\} \\
&=&\left\{ 
\begin{array}{ll}
-1-t-k & \text{if \ }t<\frac{-k-1}{2} \\ 
\frac{-k-1}{2} & \text{if \ }t\geq \frac{-k-1}{2}%
\end{array}%
\right. 
\end{eqnarray*}%
So that, $\ x\ast z\in \lbrack f]_{t}$. By using lemma \ref{LEM1}, $[f]_{t}$
is an ideal of $X$.\newline
$(2)\Rightarrow (1)$: Suppose that $(2)$ hold. If $f(1)>\max \{f(y),\frac{%
-k-1}{2}\}$ for all $y\in X$, then $f(1)>t_{y}\geq \max \{f(y),\frac{-k-1}{2}%
\}$ for some $t_{y}\in \lbrack \frac{-k-1}{2},0)$. It follows that $x\in
C(f;t_{y})\subseteq \lbrack f]_{t_{y}}$ but $1\notin C(f;t_{y})$. Also, $%
f(1)+t_{y}+k+1>2t_{y}+k+1\geq 0$. Hence $1\notin \lbrack f]_{t_{y}}$, which
contradicts the supposition. So, $f(1)\leq \max \{f(y),\frac{-k-1}{2}\}$ for
all $y\in X$. Suppose that\ for some $x,z\in X$, we have\newline
$f(x\ast z)>\max \{f(x\ast (y\ast z)),f(y),\frac{-k-1}{2}\}$ \ \ \ \ \ \ \ \
\ \ \ $\mathbf{(D)}$\newline
Taking\ $t:=\max \{f(x\ast (y\ast z)),f(y),\frac{-k-1}{2}\}$\ implies that $%
t\in \lbrack \frac{-k-1}{2},0)$, $x\in C(f;t)\subseteq \lbrack f]_{t}$, and $%
x\ast (x\ast z)\in C(f;t)\subseteq \lbrack f]_{t}$. Since $[f]_{t}$ is an
ideal of $X$, we have $x\ast z\in \lbrack f]_{t}$, and so $f(x\ast z)\leq t$
or $f(x\ast z)+t+k+1\leq 0$. The inequality $\mathbf{(D)}$ induces $x\ast
z\notin C(f;t)$, and $f(x\ast z)+t+k+1>2t+k+1\geq 0$. Thus $x\ast z\notin
\lbrack f]_{t}$. It contradicts the supposition. Hence $f(x\ast z)\leq \max
\{f(x\ast (y\ast z)),f(y),\frac{-k-1}{2}\}$ for all $x,y,z\in X$. Using
theorem \ref{TH6}, we have, $(X,f)$ is a $([e],[e]\vee \lbrack c_{k}])$%
-ideal of $X$.
\end{proof}

If $(k=0)$, then the followig holds.

\begin{corollary}
Let $X$ be a transitive BE-algebra. Then the followings are equivalent:%
\newline
$(1)$ An $N$-structure $(X,f)$ is a $([e],[e]\vee \lbrack c])$-ideal of $X$%
\newline
$(2)$ $(\forall t\in \lbrack -1,0))$ $([f]_{t}\in J(X)\cup \{\emptyset \})$%
\newline
where $[f]_{t}:=C(f;t)\cup \{x\in X$ $|$ $f(x)+t+1\leq 0\}$, and $J(X)$ is a
set of all ideal of $X$
\end{corollary}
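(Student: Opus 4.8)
The plan is to recognize this corollary as the special case $k=0$ of the theorem immediately preceding it, and to verify that each $k$-parameterized object reduces correctly under this choice. Since $0\in(-1,0]$, the theorem is applicable verbatim with $k=0$, so no new argument is required beyond confirming the notational specializations and then invoking the theorem.

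First I would substitute $k=0$ into every quantity appearing in the theorem. The threshold $\frac{-k-1}{2}$ becomes $-0.5$, and the cut-set $[f]_{t}:=C(f;t)\cup\{x\in X\mid f(x)+t+k+1\leq 0\}$ becomes $C(f;t)\cup\{x\in X\mid f(x)+t+1\leq 0\}$, which is exactly the $[f]_{t}$ appearing in the corollary. Thus statement $(2)$ of the corollary is precisely statement $(2)$ of the theorem evaluated at $k=0$.

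Next I would check that statement $(1)$ of the corollary matches statement $(1)$ of the theorem at $k=0$. This amounts to verifying that the $(k$ Conditionally$)$ Employed relation $[c_{k}]$, defined by $f(x)+t+k+1<0$, collapses at $k=0$ to the (Conditionally) Employed relation $[c]$ defined by $f(x)+t+1<0$; consequently a $([e],[e]\vee[c_{k}])$-ideal at $k=0$ is exactly a $([e],[e]\vee[c])$-ideal. With this identification the two notions of ideal coincide, so statement $(1)$ of the corollary is statement $(1)$ of the theorem at $k=0$.

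With both statements identified, the equivalence $(1)\Leftrightarrow(2)$ of the corollary is an immediate instance of the equivalence established in the theorem. The only point that needs care—and the sole (very mild) obstacle—is confirming the reduction of $[c_{k}]$ to $[c]$ at $k=0$, ensuring that the specialized definitions agree with those used for $([e],[e]\vee[c])$-ideals in the referenced work. Once this routine check is made, the corollary follows with no further computation.
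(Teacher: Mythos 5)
Your proposal is correct and is exactly the paper's own route: the paper offers no separate argument, simply prefacing the corollary with the remark that it is the case $k=0$ of the preceding theorem, which is precisely the specialization you carry out (including the reductions $\frac{-k-1}{2}\mapsto -0.5$, $[c_{k}]\mapsto [c]$, and the cut-set $[f]_{t}$). Your verification of these notational reductions is in fact slightly more careful than the paper, which leaves them implicit.
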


\section{\textbf{Conclusion:}}

In this paper, we have investigated the $([e],[e]\vee \lbrack c_{k}])$%
-ideals of BE-algebra by using transitive and distributive BE-algebra, their
related properties, and provide characterizations of $([e],[e]\vee \lbrack
c_{k}])$-ideals in an $N$-structure $(X,f)$.

Now by using these results we can deal with negative informations, also by
using these results we will be able to solve the difficulties of theories
such as probability theory, ideal theory, algebras theory. In this paper, we
give the new mathematical tools for dealing with uncertainties.

\end{document}